\newtheorem{defn}{Definition}[section]
\newtheorem{qu}{Question}[section]
\newtheorem{conj}{Conjecture}[section]
\newtheorem{lem}{Lemma}[section]
\newtheorem{thm}{Theorem}[section]
\newtheorem{cor}{Corollary}[section]
\newtheorem{rem}{Remark}[section]
\title{\bf Cone spherical metrics and stable vector bundles}
\author{Lingguang Li, Jijian Song and Bin Xu}
\begin{document}
\maketitle


\noindent{\small {\bf Abstract:} {\it Cone spherical metrics} are conformal metrics with constant curvature one and finitely many conical singularities on compact Riemann surfaces. A cone spherical metric is called {\it irreducible} if each developing map of the metric does {\it not} have monodromy lying in ${\rm U(1)}$. We establish on compact Riemann surfaces of positive genera a correspondence between irreducible cone spherical metrics with cone angles being integral multiples of $2\pi$ and line subbundles of rank two stable vector bundles. Then we are motivated by it to prove a theorem of Lange-type that there always exists a stable extension of $L^*$ by $L$, for $L$ being a line bundle of negative degree on each compact Riemann surface of genus greater than one.  At last, as an application of these two results, we obtain a new class of irreducible spherical metrics with cone angles being integral multiples of $2\pi$  on each compact Riemann surface of genus greater than one}.\\

\noindent {\it MSC2010:} {\small primary 51M10; secondary 14H60.}

\noindent{\it Keywords:} {\small cone spherical metric, indigenous bundle, stable vector bundle, Lange-type theorem}

\section{Introduction}
Let $X$ be a compact connected Riemann surface of genus $g_X$ and $D = \sum_{j=1}^n \, \beta_j \, p_j$ an $\mathbb{R}$-divisor on $X$ such that $p_1, \cdots, p_n$ are distinct $n \geq 1$ points on $X$ and $0 \neq \beta_j > -1$. We call a smooth conformal metric $g$ on $X \setminus {\rm supp} \, D := X \setminus \{p_1, \cdots, p_n\}$ a {\it conformal metric representing $D$ on $X$} if and only if, for each point $p_j$, there exists a complex coordinate chart $(U, \, z)$ centered at $p_j$ such that the restriction of $g$ to $U \setminus \{ p_j \}$ has form $e^{2 \varphi } \, |dz|^2$, where the real valued function $\varphi - \beta_j \, \ln \, |z|$ extends to a continuous function on $U$. In other words,  $g$ {\it has a conical singularities at each $p_j$ with cone angle $2 \pi( 1 + \beta_j )$}. Under mild regularity assumption on the Gaussian curvature function $K_g$ of $g$, Troyanov proved in \cite[Proposition 1]{Troyanov91} the Gauss-Bonnet formula
\[ \frac{1}{2\pi}\int_{X\setminus {\rm supp}\, D}\, K_g\, {\rm d}A_g = 2-2g_X+\deg\, D, \]
where 
we denote by $\deg \, D = \sum_{j=1}^n \, \beta_j$ the degree of $D$. We call $g$ a {\it cone spherical metric representing $D$} if $K_g \equiv +1$ outside ${\rm supp} \, D$. We also note that the PDEs satisfied by cone spherical metrics form a special class of mean field equations, which are relevant to both Onsager's vortex model in statistical physics (\cite{CLMP92}) and the Chern-Simons-Higgs equation in superconductivity (\cite{CLW2004}). Similarly, we could define {\it cone hyperbolic metrics} or {\it cone flat ones} if their Gaussian curvatures equal identically $-1$ or $0$ outside the conical singularities. People naturally came up with

\begin{qu}
\label{qu:hfs}
Characterize all real divisors with coefficients in $(-1,\, \infty)\setminus\{0\}$ on $X$ which could be represented by cone hyperbolic, flat or spherical
metrics, respectively.
\end{qu}

The Gauss-Bonnet formula gives for Question \ref{qu:hfs} a natural necessary condition of
\[ {\rm sgn} \big (2 - 2g_X + \deg \, D \big ) = {\rm sgn}(K_g). \]
It is also sufficient for the cases of hyperbolic and flat metrics, and the hyperbolic or flat metric representing $D$ on $X$ exists uniquely  (\cite{Hei62, Tro86, McOwen88, Troyanov91}). The history of the research works of cone hyperbolic metrics goes back to {\' E}. Picard \cite{Pi1905} and H. Poincar{\' e} \cite{Po1898}. However, this natural necessary condition of $\deg \, D>2g_X-2$ is {\it not} sufficient for the existence of  cone spherical metrics (\cite{Tro89}). In this case Question \ref{qu:hfs} has been open over 20 years although many mathematicians had attacked or have been investigating it by using various methods and obtained a good understanding of the question (\cite{Troyanov91, UY2000, Er04, BdMM11, EGT1405, EG15, EGT1409, EGT1504, Er1706, CLW14, MP1505, MP1807, CWWX2015, SCLX2017, MZ1710}). Then we list some of the known results which are relevant to this manuscript. Troyanov proved a general existence theorem (\cite[Theorem 4]{Troyanov91}) on the problem of prescribing the Gaussian curvature on surfaces with conical singularities in subcritical regimes. It implies that there exists at least one cone spherical metric representing the $\mathbb{R}$-divisor $D=\sum_{j=1}^n\,\beta_j p_j$ with $-1<\beta_j\not=0$ on $X$ if
\[ 0 < 2 - 2g_X + \deg \, D < \min \, \Big(2, \, 2 + 2 \min_{1 \leq  j \leq n} \, \beta_j \Big). \]
Bartolucci-De Marchis-Malchiodi proved a general existence theorem (\cite[Theorem 1.1]{BdMM11}) on the same problem in supercritical regimes. In particular, they showed that there exists a cone spherical metric representing the effective $\mathbb{Z}$-divisor $D = \sum_{j = 1}^n \, \beta_j p_j$ on $X$ if the following conditions hold:
\begin{itemize}
\item $g_X>0$,
\item $\beta_j>0$ for all $1\leq j\leq n$,
\item $2-2g_X+\deg\, D>2$ and
\[ 2 - 2g_X + \deg D \notin \left \{ \mu > 0 \mid \mu = 2k + 2 \sum_{j=1}^n n_j (1 + \beta_j ), k \in \mathbb{Z}_{\geq 0}, n_j \in \{ 0,1\} \right \}. \]
\end{itemize}
Combining the results by Troyanov and Bartolucci-De Marchis-Malchiodi, we could see that if
$D = \sum_{j=1}^n\,\beta_j p_j$ is an effective $\mathbb{Z}$-divisor of {\it odd} degree on a compact Riemann surface $X$ of genus $g_X>0$, then there always exists a cone spherical metric representing $D=\sum_{j=1}^n\,\beta_j p_j$ provided the natural necessary condition of $\deg \, D>2g_X-2$ holds. The latter existence result
for cone spherical metrics was also obtained on elliptic curves independently by
Chen-Lin  as a corollary of a more general existence theorem \cite[Theorem 1.3]{CL15} for a class of mean field equations of Liouville type with singular data.

In this manuscript we would like to investigate cone spherical metrics with cone angles in $2\pi \mathbb{Z}_{>1}$, i.e. cone spherical metrics representing effective $\mathbb{Z}$-divisors. Roughly speaking, we shall establish an algebraic framework of such metrics and obtain a new existence theorem about these metrics
as an application of the framework.
In order to state them in detail, we need to prepare some notions.

We give a quick review of developing maps of cone spherical metrics representing an effective $\mathbb{Z}$-divisors and recall the concept of reducible/irreducible (cone spherical) metrics (\cite{UY2000, Er04, CWWX2015}). We call a non-constant multi-valued meromorphic function $f:X\to \mathbb{P}^1=\mathbb{C}\cup \{\infty\}$
{\it a projective function on $X$} if and only if the monodromy of $f$ lies in the group ${\rm PSL}(2,\,\mathbb{C})$ consisting of all M{\" o}bius transformations. Then, for a projective function $f$ on $X$, we could define its {\it ramification divisor} $R(f)$, which is an effective $\mathbb{Z}$-divisor on $X$. It was proved in \cite[Section 3]{CWWX2015} that there is a cone spherical metric representing an effective $\mathbb{Z}$-divisor $D$ on $X$ if and only if there exists a projective function $f$ on $X$ such that $R(f)=D$ and the monodromy of $f$ lies in
\[{\rm PSU(2)}:=\left\{z\mapsto \frac{az+b}{-\overline{b}z+\overline{a}}:|a|^2+|b|^2=1\right\}
\subset {\rm PSL}(2,\,\mathbb{C})\]
(we call that {\it $f$ has unitary monodromy} for short later on), and $g$ equals the pull-back $f^*g_{\rm st}$
of the stardard conformal metric $g_{\rm st}=\frac{4|dw|^2}{(1+|w|^2)^2}$ by $f$. At this moment, we call $f$ a {\it developing map} of the metric $g$, which is unique up to a pre-composition with a M{\" o}bius transformation in ${\rm PSU}(2)$. In particular,  it is well known that effective $\mathbb{Z}$-divisors represented by cone spherical metrics on the Riemann sphere $\mathbb{P}^1$ are exactly ramification divisors of rational functions on $\mathbb{P}^1$ (\cite[Theorem 1.9]{CWWX2015}), and hence all of them have even degree.
Recalling the universal (double) covering $\pi:{\rm SU(2)}\to {\rm PSU(2)}$, we make an observation (Corollary \ref{cor:criterion}) that an effective $\mathbb{Z}$-divisor $D$ represented by a cone spherical metric $g$ on $X$ has {\it even} degree if and only if the monodromy representation
$\rho_f:\,\pi_1(X)\to {\rm PSU(2)}$
of a developing map $f$ of the metric $g$ could be lifted to a group homomorphism $\widetilde{\rho_f}:\,\pi_1(X)\to {\rm SU}(2)$ such that there holds
the following commutative diagram
\[\xymatrix{
  \pi_{1}(X) \ar[r]^{\widetilde{\rho_f}} \ar[rd]_{\rho _{f}} & {\rm SU(2)} \ar[d] ^{\pi} \\
                                                                                & {\rm PSU(2)}
}\]

A cone spherical metric  is called {\it reducible} if and only if some developing map of it has monodromy in ${\rm U}(1)=\left\{z\mapsto e^{\sqrt{-1}\, t}z : t\in [0,\,2\pi)\right\}$. Otherwise, it is called {\it irreducible}. Q. Chen, W. Wang, Y. Wu and the last author \cite[Theorem 1.4-5]{CWWX2015} established a correspondence between meromorphic one-forms with simple poles and periods  in $\sqrt{-1}\mathbb{R}$ and general reducible cone spherical metrics, whose cone angles do not necessarily lie in $2\pi \mathbb{Z}_{>1}$.
In particular, an effective $\mathbb{Z}$-divisor $D$ represented by a reducible metric must have {\it even} degree since reducible metrics satisfy the lifting property in the last paragraph (\cite[Lemma 4.1]{CWWX2015}).
For simplicity, we may look at the degree of an effective $\mathbb{Z}$-divisor represented by a cone spherical metric {\it the degree of the metric}.
Recall the fact in the last paragraph that
if $D$ is an effective $\mathbb{Z}$-divisor with degree being odd and greater than $2g_X-2$ on a compact Riemann surface $X$ of genus $g_X>0$, there always exists a cone spherical metric representing $D$ on $X$. However,  the PDE method used in its proof seems {\it invalid} for the case of even degree. The reason lies in that there exists no a priori $C^0$ estimate for the corresponding PDE due to the blow-up phenomena caused by the one-parameter family of reducible metrics representing the same even degree effective $\mathbb{Z}$-divisor (\cite[Theorems 1.4-5]{CWWX2015}).
Therefore, not only does the following framework for irreducible metrics representing effective $\mathbb{Z}$-divisors shed new light on the connection between Differential Geometry and Algebraic Geometry underlying these metrics, but also it plays a crucial role for the existence problem of cone spherical metrics of even degree. We postpone the explanation of the relevant notions of various holomorphic bundles on compact Riemann surfaces until Section 2.

\begin{thm}
\label{thm:corr}
  Let $X$ be a compact connected Riemann surface of genus $g_X>0$. Then there exists a canonical correspondence
  between irreducible metrics representing effective $\mathbb{Z}$-divisors and line subbundles of rank two stable bundles on $X$. The detailed statement is divided into the following two cases according to the parity of the degree of metrics.
  \begin{enumerate}
    \item {\rm (Odd case)}
    We have the correspondence of \\

    $\left\{
      \begin{array}{c}
        \begin{tabular}{p{7em}}
          Cone spherical metric of odd degree
        \end{tabular}
      \end{array}
    \right\} \leftrightarrows
    \left\{
      \begin{array}{c}
        \begin{tabular}{p{14em}}
          Pair $(L,E)$ of a rank two stable bundle $E$ of degree $-1$ and a line subbundle $L$ of $E$
        \end{tabular}
      \end{array}
    \right\}.$

  We explain the details as follows.
  If $g$ is a cone spherical metric of odd degree and representing $D$ on $X$, then there exists a pair $(L,\,E)$ of a rank two stable vector bundle $E$ over $X$ and a line subbundle $L$ of $E$ such that $\deg\, E=-1$ and the section $s_{(L,\,E)}$ of the $\mathbb{P}^1$-bundle $\mathbb{P}(E)$ over $X$ defined by the embedding $L\hookrightarrow E$ forms a developing map of $g$. Moreover, we have
    \[\deg\, D=\deg\, E-2\deg\, L+2g_X-2.\]
   Conversely, each such pair $(L,\,E)$ defines an irreducible metric $g$ of odd degree in the sense $s_{(L,\,E)}$ defines a developing map of $g$.

    \item {\rm (Even case)} We have the correspondence of \\

    $\left\{
      \begin{array}{c}
        \begin{tabular}{p{8em}}
          Irreducible metric of even degree
        \end{tabular}
      \end{array}
    \right\} \leftrightarrows
    \left\{
      \begin{array}{c}
        \begin{tabular}{p{14em}}
          Pair $(L,E)$ of a rank two stable vector bundle $E$ with $\det\, E={\mathcal O}_X$ and a line subbundle $L$ of $E$
        \end{tabular}
      \end{array}
   \right\}.$

The details of the even case go similarly as the odd case. Furthermore, the irreducible metric corresponding to the pair $(L,\,E)$ represents an effective $\mathbb{Z}$-divisor lying  in the linear system $| K_X - 2L |$, where $K_X$ is the canonical line bundle of $X$.
\end{enumerate}
\end{thm}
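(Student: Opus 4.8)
The plan is to realize both directions of the correspondence through the Narasimhan--Seshadri theorem, with the developing map serving as the bridge between the differential-geometric and the algebraic-geometric sides. I treat the even and odd cases in parallel, the essential difference being only whether the projective monodromy lifts to $\mathrm{SU}(2)$. Starting from an irreducible metric $g$ representing an effective $\mathbb{Z}$-divisor $D$, I take a developing map $f$ together with its monodromy representation $\rho_f\colon\pi_1(X)\to\mathrm{PSU}(2)$. In the even case $\rho_f$ lifts to $\widetilde{\rho_f}\colon\pi_1(X)\to\mathrm{SU}(2)$ by the lifting criterion in Corollary \ref{cor:criterion}, and the associated flat unitary bundle carries a holomorphic structure $E$ with $\det E=\mathcal{O}_X$ and $\deg E=0$. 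In the odd case non-liftability is exactly the obstruction $w_2\neq 0$, and I would invoke the form of Narasimhan--Seshadri valid for coprime rank and degree to produce a stable bundle $E$ of degree $-1$ whose projectivization $\mathbb{P}(E)$ is the flat $\mathbb{P}^1$-bundle determined by $\rho_f$ (here $E$ itself is only projectively flat). The key point common to both cases is that irreducibility of the metric means $\rho_f$ has no invariant line, equivalently is not conjugate into $\mathrm{U}(1)\subset\mathrm{PSU}(2)$, which by Narasimhan--Seshadri is precisely the stability of $E$. The developing map, read on the universal cover where the (projectively) flat bundle trivializes, becomes a holomorphic section of $\mathbb{P}(E)$, and pulling back the tautological inclusion produces the line subbundle $L\hookrightarrow E$ with $s_{(L,E)}=f$.

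For the converse I would reverse each step. Given $(L,E)$ with $E$ stable, Narasimhan--Seshadri equips $E$ with a unique compatible flat unitary connection in the even case, and a projectively flat unitary connection in the odd case, so $\mathbb{P}(E)$ acquires the structure of a flat $\mathbb{P}^1$-bundle with monodromy in $\mathrm{PSU}(2)$. Developing the section $s_{(L,E)}$ on the universal cover yields a multivalued map $f\colon X\to\mathbb{P}^1$ with unitary monodromy, and $g:=f^*g_{\mathrm{st}}$ is the desired cone spherical metric, whose conical singularities sit at the ramification points with cone angles $2\pi(1+\beta_j)$. Stability is used twice here: it guarantees that $L$ is not a flat subbundle, so $f$ is non-constant and $g$ is a genuine metric, and it forces $\rho_f$ to be irreducible, so $g$ is irreducible. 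That the two constructions are mutually inverse follows from the uniqueness of the flat (projectively flat) unitary connection on a stable bundle and the uniqueness of the developing map up to the $\mathrm{PSU}(2)$-ambiguity.

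The degree formula and the linear-system statement come from the standard second-fundamental-form computation for a section of a ruled surface. Writing $M:=E/L$, the derivative of the section $s_{(L,E)}$ is a holomorphic bundle map $T_X\to s_{(L,E)}^{\ast}T_{\mathbb{P}(E)/X}=\mathrm{Hom}(L,E/L)=L^{-1}\otimes M$, so $\mathrm{d}f$ is a holomorphic section of $K_X\otimes L^{-1}\otimes M$ whose zero divisor is the ramification divisor $R(f)=D$. Taking degrees and using $\deg M=\deg E-\deg L$ gives $\deg D=(2g_X-2)-\deg L+\deg M=(2g_X-2)+\deg E-2\deg L$, which is the asserted formula. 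In the even case $\det E=\mathcal{O}_X$ forces $M\cong L^{-1}$, whence $\mathcal{O}_X(D)\cong K_X\otimes L^{-2}$ and $D\in|K_X-2L|$.

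The main obstacle I anticipate is the careful handling of the odd case: realizing the non-liftable projective unitary monodromy as the projectivization of an honest rank two stable bundle of degree $-1$, and checking that the holomorphic section $s_{(L,E)}$ developed against the projectively flat connection still yields a single-valued map on the universal cover with the correct $\mathrm{PSU}(2)$ monodromy and ramification. This amounts to matching the $w_2$-obstruction to the parity of the degree and verifying that the relative-tangent computation is insensitive to twisting $E$ by a line bundle, which is where I expect the bookkeeping to be most delicate; the even case, where the flat bundle is a literal $\mathrm{SU}(2)$-bundle, should then follow as the simpler parallel argument.
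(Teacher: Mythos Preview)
Your proposal is correct and follows the same overall strategy as the paper: both directions of the correspondence pass through Narasimhan--Seshadri, matching irreducibility of the monodromy $\rho_f$ to stability of $E$, and the section $s_{(L,E)}$ of $\mathbb{P}(E)$ plays the role of the developing map. The paper packages this in the language of \emph{branched projective coverings} and \emph{indigenous bundles} (Theorem~\ref{thm:correspondence} and Lemma~\ref{lem:evendegree}), while you work directly with the monodromy representation and the $w_2$-obstruction, but the content is the same; in particular your observation that stability forces $L$ not to be a flat subbundle corresponds exactly to the paper's remark that every holomorphic section of $\mathbb{P}(E)$ is non-locally flat when $E$ is stable of degree $-1$ or $0$.

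The one genuine difference is the degree formula. The paper (Lemma~\ref{lem:degreerelation}) first establishes it in the even case by an explicit cocycle computation with the transition functions $M_{\alpha\beta}\in\mathrm{SL}(2,\mathbb{C})$ carried out inside the proof of Lemma~\ref{lem:evendegree}, and then reduces the odd case to the even one by pulling back along an unramified connected double cover $\pi\colon\tilde X\to X$ (Lemma~\ref{lem:doublecover}), on which the branching degree doubles and becomes even. Your second-fundamental-form argument---identifying $\mathrm{d}f$ with a global holomorphic section of $K_X\otimes L^{-1}\otimes(E/L)$ whose zero divisor is $D$---is more conceptual: it treats both parities uniformly and yields $D\in|K_X-2L|$ in the even case immediately, without the double-cover detour. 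The paper's cocycle approach buys, in exchange, a completely explicit description of the indigenous bundle and makes the parity criterion of Lemma~\ref{lem:evendegree} transparent.
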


\begin{rem}
{\rm
There exists also a correspondence between reducible metrics representing effective divisors and pairs
$(L,\,J\oplus J^*)$, where $J$ is a flat line bundle on $X$ and $L$ is a line subbundle of  $J\oplus J^*$ such that the section $s_{(L,J\oplus J^*)}$ of the
${\Bbb P}^1$-bundle ${\Bbb P}(J\oplus J^*)$ over X is {\it non-locally flat} (Definition \ref{def:nonflat}). Actually in Section 2 we encapsule it and Theorem \ref{thm:corr} into Theorem \ref{thm:correspondence}. 
These three correspondences are {\it not} one-to-one since tensoring a pair $(L,\,E)$ in any of them by an order-2 line bundle results in the same cone spherical metric. We will investigate this question carefully in a future paper.}
\end{rem}

We observe that a pair $(L,\,E)$ in the even case of Theorem \ref{thm:corr}  is nothing but a stable extension $E$ of $L^*$ by $L$, i.e.
\[0\to L\to E\to L^*\to 0\quad \text{with $E$ stable}.\]
Atiyah \cite{Atiyah57} proved that there exists no rank two stable vector bundle of even degree on  elliptic curves. Hence, cone spherical metrics of even degree are all reducible on elliptic curves. In order to find irreducible metrics of even degree,  we are naturally motivated to ask on a compact Riemann surface of genus greater than one the following question which is relevant to the Lange conjecture (\cite[p. 455]{Lan83}) solved by Russo-Teixidor i Bigas (\cite{RT99}) and Ballico-Russo (\cite{BR98, Ba2000}).

\begin{qu}
\label{qu:stable}
Let $L$ be a line bundle of negative degree on a compact Riemann surface $X$ of genus $g_X \geq 2$.
Does there exist a stable extension $E$ of $L^*$ by $L$?
\end{qu}

We note that Question \ref{qu:stable} in its particular setting is more refined than the Lange conjecture, which concerns the existence of stable extensions for two generic stable vector bundles which satisfy the natural slope inequality. We give an affirmative answer to the question which may be thought of as a Lange-type theorem.

\begin{thm}
\label{thm:stable}
Let $L$ be a line bundle of negative degree on a compact Riemann surface $X$ of genus $g_X\geq 2$.
There always exists a stable extension $E$ of $L^*$ by $L$.
\end{thm}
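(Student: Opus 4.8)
The plan is to prove the statement by a dimension count of Lange type on the projective space of extension classes. Write $d=\deg L<0$; then any extension $E$ sits in $0\to L\to E\to L^*\to 0$ and has $\deg E=0$, so $E$ is stable precisely when every line subbundle $M\subset E$ satisfies $\deg M<0$. Extensions of $L^*$ by $L$ are parametrized by $\mathrm{Ext}^1(L^*,L)\cong H^1(X,L^{\otimes 2})$; since $\deg L^{\otimes 2}=2d<0$ forces $h^0(L^{\otimes 2})=0$, Riemann--Roch gives $\dim H^1(X,L^{\otimes 2})=g_X-1-2d$, and I would work in the projective space $\mathbb{P}:=\mathbb{P}\,H^1(X,L^{\otimes 2})$ of dimension $g_X-2-2d$, noting that $e$ and its nonzero scalar multiples yield isomorphic bundles.

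The key step is to identify the classes $e$ giving non-stable $E$. Given a line subbundle $M\subset E$ with $\deg M\ge 0$, the composite $M\to E\to L^*$ cannot vanish (otherwise $M\hookrightarrow L$ would force $\deg M\le d<0$), so it exhibits $M\cong L^*(-\Delta)$ for a unique effective divisor $\Delta$ with $0\le \deg\Delta\le -d$, the inclusion being the canonical one $\iota_\Delta\colon L^*(-\Delta)\hookrightarrow L^*$. Standard extension theory then shows that $\iota_\Delta$ lifts to a map $M\to E$ --- equivalently, $E$ is destabilized along $\Delta$ --- exactly when $e$ lies in the kernel $K_\Delta$ of the pullback $\iota_\Delta^*\colon H^1(X,L^{\otimes 2})\to H^1(X,L^{\otimes 2}(\Delta))$ induced by $\mathcal{O}_X\hookrightarrow\mathcal{O}_X(\Delta)$. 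From the skyscraper sequence $0\to L^{\otimes 2}\to L^{\otimes 2}(\Delta)\to L^{\otimes 2}(\Delta)|_\Delta\to 0$ together with the vanishing $h^0(L^{\otimes 2}(\Delta))=0$ (again a negative-degree line bundle), Riemann--Roch yields $\dim K_\Delta=\deg\Delta$ for every such $\Delta$. Hence the non-stable locus is
\[
B\;=\;\bigcup_{k=0}^{-d}\ \bigcup_{\Delta\in \mathrm{Sym}^k X} K_\Delta ,
\]
a finite union over the symmetric products $\mathrm{Sym}^k X$.

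To finish I would bound the dimension of $B$: for each $k$ the incidence variety $\{(\Delta,e):\Delta\in\mathrm{Sym}^k X,\ e\in K_\Delta\}$ fibers over the $k$-dimensional $\mathrm{Sym}^k X$ with fibers of constant dimension $k$, so its image in $\mathbb{P}$ has dimension at most $2k-1$. Taking the maximum over $0\le k\le -d$ gives $\dim \mathbb{P}(B)\le -2d-1$, while $\dim\mathbb{P}=g_X-2-2d$; the inequality $-2d-1<g_X-2-2d$ is equivalent to $g_X\ge 2$. Therefore $\mathbb{P}(B)\subsetneq\mathbb{P}$, and any class $e$ outside $B$ produces a stable extension $E$. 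The main obstacle is the bookkeeping in the middle step --- verifying that every destabilizing subbundle is accounted for by some $K_\Delta$, and that the kernel dimensions $\dim K_\Delta=\deg\Delta$ are constant along each $\mathrm{Sym}^k X$, which is what makes the family-dimension estimate clean --- and it is precisely the resulting comparison of dimensions that singles out the hypothesis $g_X\ge 2$.
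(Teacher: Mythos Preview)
Your argument is correct and is a clean, self-contained dimension count: every destabilizing subbundle of a degree-zero extension $E$ arises as $L^*(-\Delta)$ for some effective $\Delta$ of degree at most $-d$, the kernel $K_\Delta$ has dimension exactly $\deg\Delta$ because $h^0(L^{\otimes 2}(\Delta))=0$, and the resulting incidence varieties over $\mathrm{Sym}^k X$ sweep out a locus of projective dimension at most $-2d-1<g_X-2-2d$ in $\mathbb{P}H^1(X,L^{\otimes 2})$. This is precisely a Lange-type count, and it even yields the stronger statement that the stable locus is a nonempty Zariski open subset of the extension space.

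The paper, by contrast, does \emph{not} do a dimension count. It argues by induction on $-\deg L$: the base case $\deg L=-1$ is handled by quoting Lange--Narasimhan (their Corollary~1.2) to produce an extension in which $L$ is a maximal subbundle; the inductive step takes a stable $E$ containing $L(p)$ (from the hypothesis), then performs two successive elementary (Hecke) modifications at $p$ --- first choosing $E(-p)\subsetneq\mathcal{F}\subsetneq E$ so that $\mathcal{F}/L$ stays locally free, then choosing $\mathcal{F}\subsetneq\mathcal{G}\subsetneq\mathcal{F}(p)$ --- and uses the openness of stability (Narasimhan--Seshadri) on the $\mathbb{P}^1$ of such $\mathcal{G}$'s, knowing that $\mathcal{G}=E$ is one stable member, to get a stable $\mathcal{G}$ with $\det\mathcal{G}=\mathcal{O}_X$ containing $L$. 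So the paper's proof is more constructive/deformation-theoretic but relies on an external input for the base case, whereas your route is global and independent of Lange--Narasimhan; on the other hand, the paper's modification technique makes the inductive mechanism transparent and would adapt to situations where the extension group is harder to control directly.
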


Then Theorems \ref{thm:corr} and \ref{thm:stable} give a new class of irreducible metrics of even degree.

\begin{cor}
\label{cor:irr}
Let $D$ be an effective $\mathbb{Z}$-divisor on a compact Riemann surface $X$ of genus $g_X\geq 2$ such that
$\deg\, D$ is even and greater than $2g_X-2$. Then there exists a cone spherical metric on $X$ representing some effective divisor linearly equivalent to $D$.
\end{cor}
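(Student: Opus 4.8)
The plan is to deduce the corollary formally from the even case of Theorem~\ref{thm:corr} together with Theorem~\ref{thm:stable}, so that the only real construction needed is that of an appropriate line bundle $L$. By the even case of Theorem~\ref{thm:corr}, any pair $(L,E)$ with $E$ a rank two stable bundle satisfying $\det E\cong\mathcal{O}_X$ and a line subbundle $L\hookrightarrow E$ yields an irreducible cone spherical metric representing an effective $\mathbb{Z}$-divisor in the linear system $|K_X-2L|$. Thus it suffices to produce a line bundle $L$ of negative degree with $K_X\otimes L^{-2}\cong\mathcal{O}_X(D)$: then $|K_X-2L|=|D|$, which is nonempty since $D$ is already effective, and the resulting metric automatically represents an effective divisor linearly equivalent to $D$.

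First I would build $L$ as a square root of $N:=K_X\otimes\mathcal{O}_X(-D)$. Its degree $\deg N=(2g_X-2)-\deg D$ is even because $\deg D$ is even, and every even-degree line bundle admits a square root: fixing any $M_0$ with $\deg M_0=\tfrac12\deg N$, the class $N\otimes M_0^{-2}$ lies in ${\rm Pic}^0(X)$, which is a divisible group (a complex torus) and hence has a square root $M_1$, so that $L:=M_0\otimes M_1$ satisfies $L^{\otimes2}\cong N$, i.e. $K_X\otimes L^{-2}\cong\mathcal{O}_X(D)$. Its degree is $\deg L=g_X-1-\tfrac12\deg D$, which is negative precisely because $\deg D>2g_X-2$; this negativity is exactly the hypothesis required to apply Theorem~\ref{thm:stable}.

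I would then invoke Theorem~\ref{thm:stable} for this $L$ to obtain a stable extension $0\to L\to E\to L^*\to0$. Taking determinants gives $\det E\cong L\otimes L^*\cong\mathcal{O}_X$, so $(L,E)$ is admissible input for the even case of Theorem~\ref{thm:corr}, which produces the desired metric and completes the argument. The whole proof is essentially formal once the two main theorems are available; the only point demanding attention is the construction and degree bookkeeping of the square root $L$, where both the evenness of $\deg D$ and the divisibility of ${\rm Pic}^0(X)$ enter. I do not anticipate a genuine obstacle here, since the substantive work---the geometric correspondence and the Lange-type existence of a stable extension---is precisely what Theorems~\ref{thm:corr} and~\ref{thm:stable} already supply.
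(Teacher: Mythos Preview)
Your proposal is correct and follows essentially the same route as the paper: choose a line bundle $L$ of negative degree with $K_X-2L\cong\mathcal{O}_X(D)$, apply Theorem~\ref{thm:stable} to obtain a stable extension $E$ of $L^*$ by $L$ (so $\det E\cong\mathcal{O}_X$), and then invoke the even case of Theorem~\ref{thm:corr} to produce an irreducible metric representing a divisor in $|D|$. You spell out the square-root construction via divisibility of $\mathrm{Pic}^0(X)$ more carefully than the paper, which simply asserts that such an $L$ exists; the paper in turn adds the incidental observation (via Riemann--Roch) that $\dim|D|\geq g_X\geq 2$, which you omit but which is not needed for the stated corollary.
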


\noindent {\bf Proof of the corollary} By the very condition of $L$, we could choose a negative line bundle $L$ such that $K_X-2L= \mathcal{O}_{X}(D)$. Then we choose a stable extension $E$ of $L^*$ by $L$ by Theorem \ref{thm:stable}. By Theorem \ref{thm:corr}, the pair $(L,\,E)$ corresponds to an irreducible metric representing some divisor in the complete linear system $|D|$. We note that by the Riemann-Roch theorem, the projective space of $|D|$ has dimension $(\deg\, D-g_X) \geq g_X\geq 2$. $\hfill{\Box}$

\begin{conj}
\label{conj:irr}
Under the assumption of Corollary \ref{cor:irr}, the effective divisors in $|D|$ represented by irreducible metrics on $X$ form a non-empty open subset of $|D|$ in the Zariski topology  of $|D|$.
\end{conj}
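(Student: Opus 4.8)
The non-emptiness is exactly Corollary~\ref{cor:irr}, so the whole issue is Zariski-openness of the locus
\[R:=\{D'\in|D|\ :\ D'\text{ is represented by some irreducible metric}\}.\]
The plan is to realize $R$ as the image of a natural algebraic map from a space of extensions and to prove that this map is open. Recall from the even case of Theorem~\ref{thm:corr} that an irreducible metric representing $D'\in|D|$ is the same datum as a pair $(L,E)$ with $E$ rank two stable, $\det E=\mathcal{O}_X$, $L\hookrightarrow E$ a line subbundle, and $D'\in|K_X-2L|$, the represented divisor being the zero divisor of the second fundamental form $\psi_E\in H^0(X,K_X-2L)$ of $L\subset E$ with respect to the indigenous connection. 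Since $D'\sim D$ and $K_X-2L\sim D'$, the line bundle $L$ must be one of the finitely many square roots $L_1,\dots,L_{2^{2g_X}}$ of $K_X\otimes\mathcal{O}_X(-D)$, each of negative degree, and $|K_X-2L_i|=|D|$ for every $i$. Hence $R=\bigcup_i\Phi_i(V_i)$ for the maps $\Phi_i$ and open stable loci $V_i$ constructed below, and it suffices to prove each $\Phi_i(V_i)$ is Zariski-open.

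Fix one square root $L:=L_i$. Extensions $0\to L\to E\to L^*\to 0$ are classified by ${\rm Ext}^1(L^*,L)=H^1(X,L^2)$, and by Serre duality $H^1(X,L^2)\cong H^0(X,K_X-2L)^\vee$, so
\[\mathbb{P}\big({\rm Ext}^1(L^*,L)\big)\cong\mathbb{P}\big(H^0(X,K_X-2L)^\vee\big)=|D|^\vee\]
is the dual projective space of $|D|$, in particular of the same dimension $\deg D-g_X$. By Theorem~\ref{thm:stable} together with the openness of stability, the locus $V\subset\mathbb{P}({\rm Ext}^1(L^*,L))$ parametrizing extensions with stable total space $E$ is a non-empty Zariski-open subset. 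Sending a stable extension to $(L,E)$ and then, via Theorem~\ref{thm:corr}, to the represented divisor defines the ramification map
\[\Phi:=\Phi_i:V\longrightarrow|D|,\qquad [E]\mapsto{\rm div}(\psi_E).\]
Because the indigenous connection on $(L,E)$ and hence $\psi_E$ vary algebraically with the extension class, $\Phi$ is a morphism of quasi-projective varieties, and $\Phi(V)$ is exactly the set of divisors in $|D|$ represented by irreducible metrics arising from $L$.

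The crux is to show $\Phi$ is an open map. Source and target are smooth of the same dimension, so by miracle flatness it suffices to prove $\Phi$ is quasi-finite, i.e. that only finitely many stable extensions produce a given ramification divisor; flatness then forces $\Phi(V)$ to be Zariski-open, whence $R=\bigcup_i\Phi_i(V_i)$ is open and non-empty. To attack quasi-finiteness I would first establish dominance by a deformation computation at a single stable point $[E_0]\in V$: identify $T_{[E_0]}\mathbb{P}({\rm Ext}^1)$ with ${\rm Ext}^1(L^*,L)/\langle e_0\rangle$ and $T_{{\rm div}(\psi_{E_0})}|D|$ with $H^0(K_X-2L)/\langle\psi_{E_0}\rangle$, compute the differential $d\Phi_{[E_0]}$ as the variation of the second fundamental form under a first-order change of extension class and of indigenous connection, and show it is an isomorphism by pairing against Serre duality. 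Dominance would then give, by Chevalley, that $\Phi(V)$ at least contains a dense Zariski-open subset of $|D|$, which already yields a weaker form of the statement.

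I expect the main obstacle to be upgrading generic dominance to everywhere quasi-finiteness, equivalently ruling out positive-dimensional fibers of $\Phi$ over the entire stable locus. Two stable extensions with the same ramification divisor $D'$ give stable bundles $E,E'$ whose indigenous sections share the horizontal locus $D'$; one must show this forces $E\cong E'$ up to the finite ambiguity flagged in the Remark after Theorem~\ref{thm:corr} (tensoring by order-two line bundles). I would try to extract this rigidity from monodromy: the represented divisor together with the developing data should pin down the projective monodromy representation up to finite center, hence the stable bundle up to finite ambiguity by Narasimhan--Seshadri. Making this precise—so that $\Phi$ is genuinely quasi-finite rather than merely generically finite—is the essential difficulty, and is presumably why the statement is recorded here as a conjecture rather than a theorem.
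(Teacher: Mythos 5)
You should first note that the statement you were asked to prove is recorded in the paper as Conjecture~\ref{conj:irr}: the paper supplies \emph{no} proof of it (only the non-emptiness, which is Corollary~\ref{cor:irr}), so there is no internal argument to compare against, and your proposal has to stand on its own. Your reductions are sound as far as they go: since $D'\sim D$ forces $L^2\cong K_X\otimes\mathcal{O}_X(-D)$, the bundle $L$ ranges over the $2^{2g_X}$ square roots, each of negative degree with $|K_X-2L_i|=|D|$; the dimension count $\dim\mathbb{P}(\mathrm{Ext}^1(L^*,L))=h^0(K_X-2L)-1=\deg D-g_X$ is correct; and the stable locus $V$ is non-empty and Zariski-open by Theorem~\ref{thm:stable} and the openness of stability. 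You also honestly flag that quasi-finiteness of $\Phi$ is unproven, and indeed the Remark after Theorem~\ref{thm:corr} makes clear that the fibers of the correspondence are not understood beyond the order-two tensoring ambiguity.

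The genuine gap, however, comes earlier and you assert it rather than prove it: the claim that ``the indigenous connection on $(L,E)$ and hence $\psi_E$ vary algebraically with the extension class,'' so that $\Phi$ is a morphism of quasi-projective varieties. The flat structure on a stable $E$ with $\det E=\mathcal{O}_X$ is produced by the Narasimhan--Seshadri theorem, and the resulting unitary flat connection depends only real-analytically --- not holomorphically, let alone algebraically --- on the holomorphic structure, i.e.\ on the point of $\mathbb{P}(\mathrm{Ext}^1(L^*,L))$. (Holomorphic connections on $E$ form an affine space over $H^0(X,\mathrm{End}(E)\otimes K_X)$, and the unitarity condition singling out the Narasimhan--Seshadri connection is transcendental; nothing in the paper, nor in \cite{NS1965}, gives an algebraic selection.) Consequently $\Phi$ is a priori only a real-analytic map, and every tool in your chain --- miracle flatness, Chevalley constructibility of images, ``flat and finite type $\Rightarrow$ open'' --- is a theorem about morphisms of varieties and cannot be invoked. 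Even if your differential computation at $[E_0]$ succeeds and the fibers are finite, the conclusion would be openness of the irreducible locus in the \emph{Euclidean} topology of $|D|$, which is strictly weaker than the Zariski openness the conjecture asserts. This transcendence of the metric-to-bundle correspondence, together with the fiber-finiteness issue you do acknowledge, is exactly why the statement remains a conjecture, and your proposal does not close either gap.
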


We speculate that there should also exist a parallel correspondence between general cone spherical metrics with
cone angles {\it not} necessarily lying in $2\pi{\Bbb Z}$ and parabolic line subbundles of rank two parabolic polystable bundles with parabolic degree {\it zero}. We shall investigate this correspondence in a future paper.
To conclude this introductory section, we explain briefly the organization of the left two sections of this manuscript. In Section 2, we shall establish a correspondence between cone spherical metrics representing effective divisors and non-locally flat sections of projective bundles associated to rank two polystable vector bundles, which contains Theorem \ref{thm:corr} as a particular case. In the last section, we use a result of Lange-Narasimhan (\cite[Corollary 1.2]{LN83}) and the induction argument to prove Theorem \ref{thm:stable}.

\section{Cone spherical metrics and indigenous bundles}
As we observed in Section 1, cone spherical metrics representing effective divisors  are equivalent to projective functions with unitary monodromy on compact Riemann surfaces, which naturally give {\it branched projective coverings} and {\it indigenous bundles} (see their definitions in Subsection \ref{subsec:branch}) on the Riemann surfaces. Moreover, such indigenous bundles are the associated projective bundles of rank two poly-stable bundles by the unitary monodromy property. In this way, we could establish the correspondence in Theorems \ref{thm:corr} and \ref{thm:correspondence}.
We state it  in Subsection \ref{subsec:state}, prepare the notions and a lemma of it in Subsection \ref{subsec:branch}, and prove it in Subsection \ref{subsec:pf}.

\subsection{Statement of the correspondence}
\label{subsec:state}

We need to prepare the notions of unitary flat holomorphic vector bundles and projective bundles on Riemann surfaces before stating the above-mentioned correspondence. Let $E$ be a holomorphic vector bundle of rank $r$ on $X$. We call that $E$ {\it has a unitary flat trivializations} if there exists a collection of trivializations $\psi_{\alpha}: E|_{U_{\alpha}} \rightarrow U_{\alpha} \times \mathbb{C}^r$ such that the corresponding transition functions
\[ g_{\alpha\beta}: U_{\alpha} \cap U_{\beta} \rightarrow U(r) \]
are all constants, where
\[ \psi_{\alpha} \circ \psi_{\beta}^{-1}(x, v) = \big(x, g_{\alpha\beta}(x)v \big) \quad \forall x \in U_{\alpha} \cap U_{\beta}, v\in \mathbb{C}^{r}. \]
Two such trivializations $\{U_{\alpha}, \psi_{\alpha}\}$ and $\{U_{\alpha}, \tilde{\psi}_{\alpha}\}$ are called equivalent if there exists a collection of maps $\varphi_{\alpha}: U_{\alpha} \times \mathbb{C}^r \rightarrow U_{\alpha} \times \mathbb{C}^r$ such that
\begin{itemize}
  \item $\varphi_{\alpha}(x, v)=\big( x, g_{\alpha}(x)v \big)$, where $g_{\alpha}: U_{\alpha} \rightarrow U(r)$ is a constant map.
  \item For all $\beta$, $\varphi_{\alpha} \circ \psi_{\alpha} \circ \psi_{\beta} ^{-1} = \tilde{\psi}_{\alpha} \circ \tilde{\psi}_{\beta} ^{-1}\circ \varphi_{\beta}$ in $(U_{\alpha}\cap U_{\beta}) \times \mathbb{C}^r$.
\end{itemize}
Then an equivalence class of such trivializations is called a {\it unitary flat structure} of $E$. A holomorphic vector bundle endowed with a unitary flat structure on it is called a {\it unitary flat vector bundle}. In other words, all the unitary flat vector bundles of rank $r$ on $X$ constitute the set $H^1(X, U(r))$. Similarly, we could define {\it projective unitary flat structures} of holomorphic projective bundles.

\begin{defn}
\label{def:nonflat} {\rm (\cite{Mand73})}
Suppose $P$ is a projective unitary flat $\mathbb{P}^1$-bundle on a Riemann surface $X$. Then we could choose a family of trivializations $\psi_{\alpha}: P|_{U_{\alpha}} \rightarrow U_{\alpha} \times \mathbb{P}^1$ such that the corresponding transition functions $g_{\alpha\beta}: U_{\alpha} \cap U_{\beta} \rightarrow PSU(2)$ are  constant maps. For any cross-section $s$ of $P$, $\psi_{\alpha} \circ s|_{U_{\alpha}}$ can be viewed as a map $s_{\alpha}: U_{\alpha} \rightarrow \mathbb{P}^1$. We call $s$ {\it non-locally flat} if $s_{\alpha}$ is a non-constant map for all $\alpha$. Since the transition functions of $P$ are constant, a cross-section $s$ is non-locally flat if and only if $s_{\alpha}$ is not a constant map for some $\alpha$. This property does not depend on the choice of the flat trivializations.
\end{defn}

Let $E$ be a unitary flat vector bundle of rank $2$ on $X$, and ${\frak s}$ a meromorphic section of $E$. Then the projective bundle $\mathbb{P}(E)$ is projective unitary flat. We call ${\frak s}$ {\it non-locally flat} if and only if its induced section $s$ of $\mathbb{P}(E)$ is non-locally flat.

Now we could state the correspondence between cone spherical metrics representing effective divisors and non-locally flat sections of rank two poly-stable vector bundles, which contains Theorem \ref{thm:corr} as a particular case.
\begin{thm}
\label{thm:correspondence}
  We are considering cone spherical metrics representing effective ${\Bbb Z}$-divisors on a compact Riemann surface $X$ of genus $g_X >0$. Then there exist the following three canonical correspondences
  \begin{enumerate}
    \item
    $\left\{
      \begin{array}{c}
        \begin{tabular}{p{8em}}
          Metric $g$ of odd degree
        \end{tabular}
      \end{array}
    \right\} \leftrightarrows
    \left\{
      \begin{array}{c}
        \begin{tabular}{p{14em}}
          The pair $(P,s)$, where $P = \mathbb{P}(E)$ for some stable vector bundle $E$ of rank $2$ with $\deg E = -1$ and $s$ is a non-locally flat section of $P$
        \end{tabular}
      \end{array}
    \right\}$

    \item
    $\left\{
      \begin{array}{c}
        \begin{tabular}{p{8em}}
          Irreducible  metric $g$ of even degree
        \end{tabular}
      \end{array}
    \right\} \leftrightarrows
    \left\{
      \begin{array}{c}
        \begin{tabular}{p{14em}}
          The pair $(E,\,{\frak s})$, where $E$ is a flat stable vector bundle of rank $2$ with trivial determinant bundle and ${\frak s}$ is a non-locally flat meromorphic section of $E$
        \end{tabular}
      \end{array}
    \right\}$

    \item
    $\left\{
      \begin{array}{c}
        \begin{tabular}{p{8em}}
          Reducible metric $g$
        \end{tabular}
      \end{array}
    \right\} \leftrightarrows
    \left\{
      \begin{array}{c}
        \begin{tabular}{p{14em}}
          The pair $(E,\,{\frak s})$, where $E=J \oplus J^*$ for some unitary flat line bundle $J$ and ${\frak s}$ is a non-locally flat meromorphic section of $E$
        \end{tabular}
      \end{array}
    \right\}$
  \end{enumerate}
The exact meaning of these three correspondences is the same as  Theorem \ref{thm:corr}.
\end{thm}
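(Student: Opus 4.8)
The plan is to run the correspondence through the developing map together with the Narasimhan--Seshadri theorem. Recall from Section 1 that a cone spherical metric $g$ representing an effective $\mathbb{Z}$-divisor $D$ is the same datum as a non-constant projective function $f:X\to\mathbb{P}^1$ with monodromy representation $\rho_f:\pi_1(X)\to {\rm PSU}(2)$ and $R(f)=D$, unique up to the ${\rm PSU}(2)$-action on the target. First I would attach to $\rho_f$ the flat $\mathbb{P}^1$-bundle $P=\widetilde{X}\times_{\rho_f}\mathbb{P}^1$ obtained by the associated-bundle construction on the universal cover; since its transition functions may be taken in ${\rm PSU}(2)$, it carries a projective unitary flat structure in the sense of Definition \ref{def:nonflat}. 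The $\rho_f$-equivariance of $f$ makes it descend to a cross-section $s$ of $P$, and the non-constancy of $f$ is exactly the statement that $s$ is non-locally flat. This produces the assignment metric $\mapsto (P,s)$; conversely, from a non-locally flat section $s$ one reads off the multivalued map $f=\psi_\alpha\circ s$ whose branches differ by the constant ${\rm PSU}(2)$-valued transition functions, hence a projective function with unitary monodromy, and sets $g=f^{*}g_{\rm st}$.

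Next I would separate the three regimes by the reducibility of $\rho_f$ and the parity of $\deg D$. If $g$ is reducible, then after conjugation the image of $\rho_f$ lies in the diagonal torus ${\rm U}(1)\subset{\rm PSU}(2)$, so the corresponding flat rank-two bundle splits as $E=J\oplus J^{*}$ with $J$ unitary flat, giving case (3). If $g$ is irreducible of even degree, the lifting criterion recalled in Section 1 (Corollary \ref{cor:criterion}) provides a lift $\widetilde{\rho_f}:\pi_1(X)\to{\rm SU}(2)$; the associated flat bundle $E$ has trivial determinant, since $\det\circ\widetilde{\rho_f}$ is trivial, and $\mathbb{P}(E)=P$. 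In the odd case the class in $H^2(X,\mathbb{Z}/2)$ obstructs any lift to ${\rm SU}(2)$, so I instead realize $P=\mathbb{P}(E)$ for a rank-two bundle $E$ carrying a projectively flat unitary connection, and twist by a line bundle to normalize $\deg E=-1$. In each case the section $s$ is the same datum as a line subbundle $L\hookrightarrow E$, and the second fundamental form, a section of ${\rm Hom}(L,E/L)\otimes K_X\cong(\det E)\otimes L^{-2}\otimes K_X$, has vanishing locus $R(f)=D$; this yields the degree formula $\deg D=\deg E-2\deg L+2g_X-2$ of Theorem \ref{thm:corr} and, in the even case, places $D$ in $|K_X-2L|$.

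The crux is the stability assertion, which I would obtain from the Narasimhan--Seshadri theorem in the form that irreducible (projective) unitary representations correspond to stable bundles. In the even case an irreducible ${\rm SU}(2)$-representation yields a stable $E$ of degree $0$ with trivial determinant, while in the odd case the irreducible ${\rm PSU}(2)$-representation, viewed through the central extension of $\pi_1(X)$ with scalar holonomy $-I$, corresponds to a stable rank-two bundle of odd degree normalized to $-1$. The irreducibility of the metric---equivalently, that $\rho_f$ is not conjugate into ${\rm U}(1)$---is exactly what upgrades polystability to stability; conversely, stability of $E$ forbids a flat line subbundle, forcing $s$ to be non-locally flat and closing the loop. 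I expect the main obstacle to be precisely the odd case, where no vector-bundle lift of the monodromy exists and one must build $E$ with $\deg E=-1$ directly from the projectively flat structure and argue stability via the central-extension version of Narasimhan--Seshadri, while simultaneously checking that the non-locally flat condition and the ramification divisor transport correctly under the residual order-two ambiguity of tensoring by a $2$-torsion line bundle.
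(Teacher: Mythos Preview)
Your proposal is correct and follows essentially the same route as the paper: both build the projective unitary flat $\mathbb{P}^1$-bundle from the monodromy of the developing map, split the three cases via the reducibility of $\rho_f$ together with the parity/lifting criterion (Corollary~\ref{cor:criterion}, equivalently your $H^2(X,\mathbb{Z}/2)$ obstruction), and then invoke Narasimhan--Seshadri to identify irreducibility of the monodromy with stability of $E$. The only differences are cosmetic---the paper packages the construction through branched projective coverings and indigenous bundles and derives the parity statement by the explicit transition-function computation of Lemma~\ref{lem:evendegree}, whereas you phrase the same content via the associated bundle on the universal cover and the second fundamental form.
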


\subsection{Branched projective coverings}
\label{subsec:branch}

In this subsection, we at first introduce, among others, the notions of branched projective covering and indigenous bundle, which are crucial in the proof of Theorem \ref{thm:correspondence}. Then we prove Lemma \ref{lem:evendegree} which will be used in the proof of   Theorem
\ref{thm:correspondence}.

Let $X$ be a compact Riemann surface, and $\{U_{\alpha}, z_{\alpha}\}$ a holomorphic coordinate covering of $X$. If for each $\alpha$, $w_{\alpha}: U_{\alpha} \rightarrow \mathbb{P}^1$ is a non-constant holomorphic map such that $w_{\alpha} \circ w_{\beta}^{-1}(p) \in PSL(2, \mathbb{C})$ is independent of $p \in w_{\beta}(U_{\alpha} \cap U_{\beta})$, then $\{U_{\alpha}, w_{\alpha}\}$ is called a {\it branched projective covering} of $X$. Without loss of generality, we may assume that for each $\alpha$, $w_{\alpha}$ has at most one branch point $p_{\alpha}$ in $U_{\alpha}$ and $p_{\alpha} \not\in U_{\alpha} \cap U_{\beta}$ for all $\beta \neq \alpha$. For a branched projective covering $\{U_{\alpha}, w_{\alpha}\}$ of $X$, we call the effective divisor
\[ B_{\{U_{\alpha},w_{\alpha}\}} := \sum_{p \in X} \nu_{w_{\alpha}}(p)\cdot p\]
the {\it ramified divisor} of  $\{U_{\alpha}, w_{\alpha}\}$,
where $\nu_{w_{\alpha}}(p)$ is the branching order of $w_{\alpha}$ at the point $p$ \cite[Section 2]{Mand72}.
Then we could naturally associate a flat $\mathbb{P}^1$-bundle $P$ on $X$ (\cite[Section 2]{Gunning67}) to a branched projective covering  $\{U_{\alpha},w_{\alpha}\}$ of $X$, and obtain canonically
 a non-locally flat section defined by $$w_\alpha:U_\alpha\to {\Bbb P}^1\quad \text{for all}\quad \alpha$$ of $P$ (\cite[Section 2]{Gunning67}). The pair of such a flat $\mathbb{P}^1$-bundle and such a non-locally flat section is called an {\it indigenous bundle} associated to the branched projective covering $\{U_{\alpha}, w_{\alpha}\}$ on $X$ (\cite{Gunning67, Mand73}).

 \begin{lem}
 \label{lem:evendegree}
   Let $P$ be an indigenous bundle on $X$ associated to some branched projective covering $\{U_{\alpha}, w_{\alpha}\}$. Then the ramified divisor $B_{\{U_{\alpha},w_{\alpha}\}}$ has {\rm even} degree if and only if
   $P = \mathbb{P}(E)$ for some flat rank two vector bundle $E$ such that $\det(E) = \mathcal{O}_X$.
   Under this context,  there exists a meromorphic section ${\frak s} = \{({\frak s}_{1, \alpha}, {\frak s}_{2,\alpha})\}$ of $E$ such that $w_{\alpha} = \frac{{\frak s}_{1,\alpha}}{{\frak s}_{2,\alpha}}.$
   \end{lem}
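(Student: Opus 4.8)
The plan is to recast both directions of the equivalence in terms of a single $\mathbb{Z}/2$-valued invariant of the flat $\mathbb{P}^1$-bundle $P$, namely the obstruction to lifting its structure group from $\mathrm{PSL}(2,\mathbb{C})$ to $\mathrm{SL}(2,\mathbb{C})$. Since $X$ is a curve, every $\mathbb{P}^1$-bundle is of the form $P=\mathbb{P}(E_0)$ for some rank two holomorphic vector bundle $E_0$ (Tsen's theorem), and two such presentations differ by tensoring $E_0$ with a line bundle; hence the parity of $\deg E_0$ is a well-defined invariant of $P$, equal to its second Stiefel--Whitney class $w_2(P)\in H^2(X,\mathbb{Z}/2)\cong\mathbb{Z}/2$. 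Writing the monodromy of the flat structure as $\rho:\pi_1(X)\to\mathrm{PSL}(2,\mathbb{C})$, the existence of a flat rank two bundle $E$ with $\det E=\mathcal{O}_X$ and $\mathbb{P}(E)=P$ is equivalent to lifting $\rho$ to $\widetilde{\rho}:\pi_1(X)\to\mathrm{SL}(2,\mathbb{C})$, since a flat $\mathrm{SL}(2,\mathbb{C})$-bundle has trivial determinant. The obstruction to this lift lies in $H^2(\pi_1(X),\mathbb{Z}/2)$, and because $g_X>0$ makes $X$ aspherical, this group coincides with $H^2(X,\mathbb{Z}/2)$ and the flat obstruction agrees with the topological class $w_2(P)$. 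Thus both implications reduce to identifying $w_2(P)$ with $\deg B\bmod 2$.

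To compute $w_2(P)\equiv\deg B\pmod 2$ I will use the developing section $s$ of $P$ cut out by the data $\{w_\alpha\}$. Under a presentation $P=\mathbb{P}(E_0)$ the section $s$ corresponds to a line subbundle $L_0\hookrightarrow E_0$, with normal bundle $N_s=\mathrm{Hom}(L_0,E_0/L_0)$ of degree $\deg E_0-2\deg L_0$. Differentiating $s$ against the flat connection produces a holomorphic section of $K_X\otimes N_s$: in a flat chart this is simply $dw_\alpha$ read off in the local coordinate $z_\alpha$, and the M\"obius relation $w_\alpha=T_{\alpha\beta}\circ w_\beta$ on overlaps shows that these local expressions patch together, the $T_{\alpha\beta}$-contributions cancelling. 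The zero divisor of this global section is exactly the ramified divisor $B_{\{U_\alpha,w_\alpha\}}$, since the vanishing order of $dw_\alpha$ at a branch point is its branching order $\nu_{w_\alpha}$. Counting degrees gives $\deg B=(2g_X-2)+\deg E_0-2\deg L_0$, whence $\deg B\equiv\deg E_0\equiv w_2(P)\pmod 2$.

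Combining the two steps, $\deg B$ is even if and only if $w_2(P)=0$, if and only if $P=\mathbb{P}(E)$ for some flat rank two $E$ with $\det E=\mathcal{O}_X$. For the final assertion, given such an $E$ the developing section $s$ lifts to a meromorphic section $\mathfrak{s}=\{(\mathfrak{s}_{1,\alpha},\mathfrak{s}_{2,\alpha})\}$ of $E$: the line subbundle $L\hookrightarrow E$ defining $s$, composed with any meromorphic trivialization of $L$, yields $\mathfrak{s}$, and in the local frame trivializing $E$ over $U_\alpha$ the section $s$ reads $[\mathfrak{s}_{1,\alpha}:\mathfrak{s}_{2,\alpha}]$, so that $w_\alpha=\mathfrak{s}_{1,\alpha}/\mathfrak{s}_{2,\alpha}$ as required.

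The main obstacle is the parity identity of the second paragraph, that is, matching the homotopy-theoretic lifting obstruction $w_2(P)$ with the analytic degree of $B$. The delicate points are verifying that the flat derivative of the developing section glues to a genuine global holomorphic section of $K_X\otimes N_s$ and that its order of vanishing at each branch point equals the branching order $\nu_{w_\alpha}$; once these are secured, both the congruence $\deg B\equiv w_2(P)\pmod 2$ and the identification of $w_2(P)$ with the $\mathrm{SL}(2,\mathbb{C})$-lifting obstruction are routine.
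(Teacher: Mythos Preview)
Your proof is correct and follows a genuinely different, more conceptual route than the paper's. The paper works entirely with explicit \v{C}ech cocycles: writing the M\"obius relation $w_\alpha=(a_{\alpha\beta}w_\beta+b_{\alpha\beta})/(c_{\alpha\beta}w_\beta+d_{\alpha\beta})$ with $M_{\alpha\beta}\in\mathrm{SL}(2,\mathbb{C})$, it introduces the line bundle $\Lambda$ with transition functions $(c_{\alpha\beta}w_\beta+d_{\alpha\beta})^2$, shows that $\Lambda\cong K_X\otimes\mathcal{O}_X(-B)$ so that $\deg\Lambda=2g_X-2-\deg B$, and observes that the $M_{\alpha\beta}$ can be sign-adjusted to an honest $\mathrm{SL}(2,\mathbb{C})$-cocycle precisely when $\Lambda$ admits a holomorphic square root, i.e.\ when $\deg B$ is even. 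The section $\mathfrak{s}$ is then written down explicitly as $(w_\alpha f_\alpha, f_\alpha)$ for $\{f_\alpha\}$ a meromorphic section of that square root.

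Your approach instead packages both directions into the single invariant $w_2(P)\in H^2(X,\mathbb{Z}/2)$ and identifies it with $\deg B\bmod 2$ via the second-fundamental-form section of $K_X\otimes N_s$. This has the pleasant side effect of yielding the full degree formula $\deg B=2g_X-2+\deg E_0-2\deg L_0$ for \emph{any} $E_0$ with $\mathbb{P}(E_0)=P$, which the paper obtains only later (Lemma~\ref{lem:degreerelation}), and there only after treating the odd-degree case separately by pulling back to an unramified double cover. The paper's argument, by contrast, is entirely elementary and constructs $E$ and $\mathfrak{s}$ by hand, with no appeal to Stiefel--Whitney classes or to the isomorphism $H^2(\pi_1(X),\mathbb{Z}/2)\cong H^2(X,\mathbb{Z}/2)$ (which needs $g_X\geq 1$; on $\mathbb{P}^1$ the statement is vacuous anyway). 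One small point you pass over quickly: the equivalence between ``$P=\mathbb{P}(E)$ for some flat $E$ with $\det E=\mathcal{O}_X$'' and ``$\rho$ lifts to $\mathrm{SL}(2,\mathbb{C})$'' requires knowing that a flat line bundle which is holomorphically trivial admits a flat square root that is again holomorphically trivial; this holds because the kernel of $\mathrm{Hom}(\pi_1(X),\mathbb{C}^*)\to\mathrm{Pic}^0(X)$ is a complex vector space, hence $2$-divisible.
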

\begin{proof} Since $\{U_{\alpha}, w_{\alpha}\}$ is a projective covering on $X$, we could choose
 a holomorphic coordinate covering  $\{U_{\alpha}, z_{\alpha}\}$  of $X$ and a family of matrices
$M_{\alpha\beta} = \begin{pmatrix}
   a_{\alpha\beta} & b_{\alpha\beta} \\
   c_{\alpha\beta} & d_{\alpha\beta}
\end{pmatrix} \in {\rm SL(2, \mathbb{C})}$ on all intersections $U_{\alpha} \cap U_{\beta}\not=\emptyset$ such that
\begin{align}
\label{wtransition}
  w_{\alpha} = \frac{a_{\alpha\beta}w_{\beta} + b_{\alpha\beta}}{c_{\alpha\beta}w_{\beta} + d_{\alpha\beta}}
\end{align}
and $w_\alpha=w_{\alpha}(z_{\alpha}):U_\alpha\to {\Bbb C}$ are holomorphic functions by using suitable M{\" o}bius transformations if necessary. Hence we have
$\frac{dw_{\alpha}}{dw_{\beta}} = \frac{1}{(c_{\alpha\beta}w_{\beta}+d_{\alpha\beta})^2}$.
Since $K_X$ is defined by the transition functions
$k_{\alpha\beta} = \frac{dz_{\beta}}{dz_{\alpha}}$,
we have
\[ \lambda_{\alpha\beta} := (c_{\alpha\beta}w_{\beta}+d_{\alpha\beta})^2 = \frac{w_{\beta}'(z_{\beta})} {w_{\alpha}'(z_{\alpha})} \cdot \frac{dz_{\beta}}{dz_{\alpha}} = h_{\alpha\beta}k_{\alpha\beta}, \quad \text{where}\quad
h_{\alpha\beta} := \frac{w_{\beta}'(z_{\beta})} {w_{\alpha}'(z_{\alpha})}.\]
Let $H$ be the line bundle defined by the transition functions $\{ h_{\alpha\beta} \}$. Then
$\{U_{\alpha}, \frac{1}{w_{\alpha}'(z_{\alpha})}\}$ forms a meromorphic section of $H$ so that
$H = \mathcal{O}_{X}\big(-B_{\{U_{\alpha}, w_{\alpha}\}}\big)$ has degree equal to
$\big(-\deg B_{\{U_{\alpha}, w_{\alpha}\}}\big)$
and the line bundle $\Lambda$ defined by the transition functions $\{ \lambda_{\alpha\beta} \}$ has degree of $\big(2g_{X} - 2 - \deg B_{\{U_{\alpha}, w_{\alpha}\}}\big)$.

Suppose that $\deg B_{\{U_{\alpha}, w_{\alpha}\}}$ is even. Then so is $\deg \Lambda$. Then there exists a line bundle $\xi$ defined by $\xi_{\alpha\beta}$ such that $\xi_{\alpha\beta}^2 = \lambda_{\alpha\beta}$. By changing the sign of $M_{\alpha\beta}\in {\rm SL(2,\,{\Bbb C})}$ if necessary,  we have $\xi_{\alpha\beta} = c_{\alpha\beta}w_{\beta} + d_{\alpha\beta}$. Taking a non-trivial  meromorphic section $\{U_{\alpha}, f_{\alpha}\}$ of $\xi$, we find
\[ \begin{pmatrix}
  w_{\alpha}f_{\alpha} \\
  f_{\alpha}
\end{pmatrix} = \begin{pmatrix}
  a_{\alpha\beta} & b_{\alpha\beta} \\
  c_{\alpha\beta} & d_{\alpha\beta}
\end{pmatrix} \begin{pmatrix}
  w_{\beta}f_{\beta} \\
  f_{\beta}
\end{pmatrix} \]
and $M_{\alpha\beta} = M_{\alpha\gamma} M_{\gamma\beta}$. The desired flat rank two vector bundle $E$ is just the one defined by $M_{\alpha\beta}$. Furthermore,  $(w_{\alpha}f_{\alpha}, f_{\alpha})$ forms a meromorphic section ${\frak s}=({\frak s}_{1,\alpha},\,{\frak s}_{2,\alpha})$ of $E$ satisfying $w_\alpha=\frac{{\frak s}_{1,\alpha}}{{\frak s}_{2,\alpha}}$.

Suppose that there exists a rank two flat vector bundle $E$ with ${\Bbb P}(E)=P$
for the indigenous bundle $P$ which is associated to the projective covering
$\{(U_\alpha,\,w_\alpha)\}$ and has the canonical section $s:=\{w_\alpha\}$.
By the argument in the first paragraph of \cite[Section 4]{Atiyah57II}, there exists a line subbundle
$L$ of $E$ generated by $s$ such that each non-trivial meromorphic section
${\frak s}=\big({\frak s}_{1,\alpha},\,{\frak s}_{2,\,\alpha}\big)$ of the line bundle $L\subset E$ satisfies
$w_\alpha=\frac{{\frak s}_{1,\alpha}}{{\frak s}_{2,\alpha}}$.
Denote by
$ M_{\alpha\beta} = \begin{pmatrix}
   a_{\alpha\beta} & b_{\alpha\beta} \\
   c_{\alpha\beta} & d_{\alpha\beta}
\end{pmatrix} \in {\rm SL(2, \mathbb{C})}$
the transition functions of $E$. Recalling \eqref{wtransition} in the first paragraph of the proof,
we only need to show the degree of the line bundle $\Lambda$ is even since $\Lambda = H \otimes K_X$ and
\[\deg\, B_{\{U_\alpha,\,w_\alpha\}}\equiv \deg\, H\quad (\text{mod}\ 2).\]
 Since
$w_\alpha={\frak s}_{1,\alpha}/{\frak s}_{2,\alpha}$, we have
\[ \begin{pmatrix}
  w_{\alpha}{\frak s}_{2,\alpha} \\
  {\frak s}_{2,\alpha}
\end{pmatrix} = \begin{pmatrix}
  {\frak s}_{1,\alpha} \\
  {\frak s}_{2,\alpha}
\end{pmatrix} = \begin{pmatrix}
  a_{\alpha\beta} & b_{\alpha\beta} \\
  c_{\alpha\beta} & d_{\alpha\beta}
\end{pmatrix} \begin{pmatrix}
 {\frak s}_{1,\beta} \\
  {\frak s}_{2,\beta}
\end{pmatrix} = \begin{pmatrix}
  a_{\alpha\beta} & b_{\alpha\beta} \\
  c_{\alpha\beta} & d_{\alpha\beta}
\end{pmatrix} \begin{pmatrix}
  w_{\beta}{\frak s}_{2,\beta} \\
  {\frak s}_{2,\beta}
\end{pmatrix} \]
and ${\frak s}_{2, \alpha} = (c_{\alpha\beta}w_{\beta} + d_{\alpha\beta}) {\frak s}_{2,\beta}$. It follows that the functions $\{c_{\alpha\beta}w_{\beta} + d_{\alpha\beta}\}$ on $U_{\alpha} \cap U_{\beta}$ are 1-cocyles, which define a line bundle $\xi$ with $\xi^2 = \Lambda$. Hence, $\deg \Lambda$ is even.
\end{proof}

Let $g$ be a cone spherical metric on $X$ representing an effective ${\Bbb Z}$-divisor $D = \sum_{i=1}^n \beta_i p_i$. Suppose that $\{U_{\alpha}, z_{\alpha}\}$ is a holomorphic coordinate covering of $X$ such that each
$U_\alpha$ is simply connected and contains at most one point in ${\rm Supp}\,D$. Then there exists a holomorphic map $f_{\alpha}: U_{\alpha} \rightarrow \mathbb{P}^1$ for all $\alpha$ such that it has at most one ramified point and
$g|_{U_{\alpha}} = f_{\alpha}^*\,g_{\rm st}$ (\cite[Lemmas 2.1 and 3.2]{CWWX2015}), where
$g_{\rm st}=\frac{4|dw|^2}{(1+|w|^2)^2}$ is the standard metric on ${\Bbb P}^1$. Then these pairs $\{U_{\alpha}, f_{\alpha}\}$ define a branched projective covering of $X$ with ramified divisor being $D$ such that
  \[ f_{\alpha\beta} = f_{\alpha} \circ f_{\beta}^{-1}(x) \in {\rm PSU(2)} \]
is  independent of $x \in f_{\beta}(U_{\alpha} \cap U_{\beta})$. Then we obtain an indigenous bundle $P$ with structure group $PSU(2)$ on $X$ associated to the branched projective covering $\{U_\alpha,\, f_{\alpha}\}$ and a canonical section $s=\{f_{\alpha}\}$ of $P$ which is non-locally flat.

As a small application of Lemma \ref{lem:evendegree}, we could give a criterion for the parity of the degree of effective $\mathbb{Z}$-divisors represented by cone spherical metrics as the following

\begin{cor}
\label{cor:criterion}
Let $D$ be an effective $\mathbb{Z}$-divisor represented by a cone spherical metric $g$ on $X$. Then $\deg D$ is even if and only if the monodromy representation $\rho_f:\,\pi_1(X)\to {\rm PSU(2)}$ of a developing map $f$ of $g$ could be lifted to $\widetilde{\rho_f}:\,\pi_1(X)\to {\rm SU}(2)$
such that there holds
the following commutative diagram
\[\xymatrix{
  \pi_{1}(X) \ar[r]^{\widetilde{\rho_f}} \ar[rd]_{\rho _{f}} & {\rm SU(2)} \ar[d] ^{\pi} \\
                                                                                & {\rm PSU(2)}
}
\]
\end{cor}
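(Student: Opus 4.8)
The plan is to deduce the corollary directly from Lemma~\ref{lem:evendegree} applied to the indigenous bundle attached to the metric, after reinterpreting the lifting condition in terms of flat rank two vector bundles. Recall from the discussion preceding the corollary that the metric $g$ gives a branched projective covering $\{U_\alpha,\,f_\alpha\}$ with ramified divisor $D$ and an associated indigenous bundle $P$ with structure group $PSU(2)$, whose canonical section $s=\{f_\alpha\}$ is non-locally flat. The first step is to record that $P$ is nothing but the flat $PSU(2)$-bundle whose holonomy is the monodromy representation $\rho_f:\pi_1(X)\to PSU(2)$ of the developing map; its locally constant transition data are exactly $f_{\alpha\beta}=f_\alpha\circ f_\beta^{-1}\in PSU(2)$.

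The key group-theoretic observation I would then isolate is that, for the double cover $\pi:SL(2,\mathbb{C})\to PSL(2,\mathbb{C})$, one has $\pi^{-1}(PSU(2))=SU(2)$, because $\ker\pi=\{\pm I\}\subset SU(2)$. Consequently a homomorphism $\pi_1(X)\to SL(2,\mathbb{C})$ covering $\rho_f$ automatically takes values in $SU(2)$, so lifting $\rho_f$ to $SU(2)$ is the same problem as lifting it to $SL(2,\mathbb{C})$. In turn, such a lift is precisely the datum of a flat rank two vector bundle $E$ with $\det E=\mathcal{O}_X$ and $\mathbb{P}(E)=P$: the flat structure together with the triviality of the determinant forces the holonomy into $SL(2,\mathbb{C})$, while $\mathbb{P}(E)=P$ forces it to cover $\rho_f$.

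With this dictionary in hand the corollary follows by chaining two equivalences. For the ``if'' direction, a lift $\widetilde{\rho_f}:\pi_1(X)\to SU(2)$ produces a flat $SU(2)$-bundle $E$, hence a flat rank two bundle with $\det E=\mathcal{O}_X$ and $\mathbb{P}(E)=P$, so Lemma~\ref{lem:evendegree} gives that $\deg D=\deg B_{\{U_\alpha,f_\alpha\}}$ is even. For the ``only if'' direction, when $\deg D$ is even Lemma~\ref{lem:evendegree} yields a flat $E$ with $\det E=\mathcal{O}_X$ and $\mathbb{P}(E)=P$, constructed with constant transition matrices $M_{\alpha\beta}\in SL(2,\mathbb{C})$ lifting the $f_{\alpha\beta}\in PSU(2)$; by the observation above these $M_{\alpha\beta}$ in fact lie in $SU(2)$, and their holonomy is the desired lift $\widetilde{\rho_f}$.

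I expect the only genuinely delicate point to be the second step, namely the careful identification of ``lift of $\rho_f$ to $SU(2)$'' with ``flat rank two $E$ carrying $\det E=\mathcal{O}_X$ and $\mathbb{P}(E)=P$''. Here one must be slightly careful that $\det E=\mathcal{O}_X$ is meant as triviality of the flat determinant line bundle, equivalently holonomy in $SL(2,\mathbb{C})$ rather than merely $GL(2,\mathbb{C})$, which is exactly how $E$ arises in the proof of Lemma~\ref{lem:evendegree}; the unitarity of the monodromy then upgrades $SL(2,\mathbb{C})$ to $SU(2)$ for free. Everything else is a formal consequence of Lemma~\ref{lem:evendegree} and the covering $\pi:SU(2)\to PSU(2)$.
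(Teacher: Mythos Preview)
Your proposal is correct and follows the same route the paper implicitly uses: the paper offers no separate proof of the corollary, merely noting it is ``a small application of Lemma~\ref{lem:evendegree}'', and your argument makes that application precise by identifying the lifting of $\rho_f$ to $SU(2)$ with the existence of a flat rank two bundle $E$ with $\det E=\mathcal{O}_X$ and $\mathbb{P}(E)=P$. Your observation that $\pi^{-1}(PSU(2))=SU(2)$ inside $SL(2,\mathbb{C})$ is exactly the glue needed to pass between the $SL(2,\mathbb{C})$-valued transition matrices produced by Lemma~\ref{lem:evendegree} and the $SU(2)$-valued lift in the statement, and is the only point not already contained verbatim in the lemma.
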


\subsection{Proof of Theorems \ref{thm:correspondence} and \ref{thm:corr}}
\label{subsec:pf}

With the help of Lemma \ref{lem:evendegree}, we can now prove Theorem \ref{thm:correspondence}.
\begin{proof}[Proof of Theorem \ref{thm:correspondence}]
The proof is divided into three parts according to the three correspondences.

\begin{itemize}
\item[\textbf{Part I}]
  In this part, we will prove the first correspondence.

  Suppose at first that $g$ is a cone spherical metric on $X$ representing an effective ${\Bbb Z}$-divisor $D = \sum_{i=1}^n \beta_i p_i$ of odd degree. Then, by the argument before Corollary \ref{cor:criterion}, we could obtain a branched projective covering $\{U_{\alpha}, f_{\alpha}\}$ with ramified divisor $D$ such that $g|_{U_\alpha}=f^*_\alpha\, g_{\rm st}$ and an indigenous bundle $P$ associated to this covering such that the transition functions $\{U_{\alpha}, f_{\alpha\beta}\}$ lies in ${\rm PSU(2)}$. Moreover, $\{U_{\alpha}, f_{\alpha}\}$ defines a non-locally flat section $s$ of $P$.
  Since $X$ is a compact Riemann surface, there exists a rank two holomorphic vector bundle $E$ on $X$ such that $P = \mathbb{P}(E)$. Since $\deg\, D$ is odd, by Corollary \ref{cor:criterion}, the monodromy representation $\rho:\pi_1(X)\to {\rm PSU}(2)$ of the flat bundle $P$, which coincides with the one of the metric $g$, could not be lifted to ${\rm SU(2)}$. By Lemma \ref{lem:evendegree},  $\deg\, E$ is odd. In particular, the metric $g$ is irreducible. Hence,  we could assume $P = \mathbb{P}(E)$ with $\deg E = -1$ without loss of generality.

  Next, let $(P, s)$ be a pair on the right-hand side of the correspondence.
  Since $E$ is a stable vector bundle on $X$, $P = \mathbb{P}(E)$ comes from an irreducible representation ${\rho}: \pi_1(X) \rightarrow {\rm PSU(2)}$ (\cite{NS1965}). Let $\{U_{\alpha}, s_{\alpha}\}$ be the local expression of the non-locally flat section $s$. On all intersections $U_{\alpha} \cap U_{\beta}$, the local sections satisfy $ s_{\alpha} = \phi_{\alpha\beta} \circ s_{\beta}$,
  where the transition functions $\phi_{\alpha\beta}$ lies in $PSU(2)$. Defining $g_{\alpha} = s_{\alpha}^*g_{\rm st}$ on each $U_{\alpha}$, we find $g_{\alpha}|_{U_{\alpha}\cap U_{\beta}} = g_{\beta}|_{U_{\alpha}\cap U_{\beta}}$ since $\phi_{\alpha\beta} \in {\rm PSU(2)}$. Hence, the metrics $g_{\alpha}$ on $U_\alpha$'s define a global cone spherical metric $g = s^*g_{st}$ representing an effective ${\Bbb Z}$-divisor $D$ on $X$.
  If the degree of $D$ is even, then $\mathbb{P}(E) = P = \mathbb{P}(E')$ for some flat vector bundle $E'$ by Lemma \ref{lem:evendegree}. Hence $\deg E$ is even. Contradiction!


\item[\textbf{Part II}] In this part, we will prove the second correspondence.
    Let $g$ be an irreducible metric on $X$ of even degree. By the discussion of the second paragraph of Part I, we know that $P$ is an indigenous bundle and $\deg B_{\{U_{\alpha}, f_{\alpha}\}}$ is an even number. Then by Lemma \ref{lem:evendegree}, we could find the pair $(E,\,{\frak s})$ as desired. On the other hand,
 given such a pair $(E,\,{\frak s})$, let $\{U_{\alpha}, (s_{1,\alpha}, s_{2, \alpha})\}$ be a local expression of ${\frak s}$, which defines a branched projective covering of $X$. Then $\{U_{\alpha}, \frac{{\frak s}_{1,\alpha}}{{\frak s}_{2,\alpha}}\}$ defines a non-locally flat section of the associated indigenous bundle $\mathbb{P}(E)$. By the same argument in the third paragraph of Part I, we could define an irreducible cone spherical metric representing an effective ${\Bbb Z}$-divisor $D$ on $X$.  Since $D$ is the ramified divisor of the branched projective covering, it has even degree by Lemma \ref{lem:evendegree}.

\item[\textbf{Part III}]  In this part, we will prove the last correspondence.

  As in Part I, let $\{U_{\alpha}, f_{\alpha}\}$ be a branched projective covering of $X$ induced by a reducible metric $g$, to which the indigenous $P$ is associated.  Hence, the monodromy representation $\rho: \pi_1(X) \rightarrow {\rm PSU(2)}$ of the flat bundle $P$ can be lifted to $\tilde{\rho}: \pi_1(X) \rightarrow SU(2)$ since $g$ is reducible. In particular, the metric $g$ is of even degree.
   Since $g$ is reducible, by Lemma 4.1 in \cite{CWWX2015}, we have,  up to a conjugation,
\[ \tilde{\rho}(\pi_1(X)) \in \{\text{diag}(e^{i\theta}, e^{-i\theta}): \theta \in \mathbb{R}\} \subset {\rm SU(2)}. \]
  It follows that there exist two flat line bundles $J_1$ and $J_2$ on $X$ such that $E = J_1 \oplus J_2$ and
  $J_1 = J_2^*$.

  Now suppose $E = J \oplus J^*$ for some flat line bundle $J$ and ${\frak s}$ is a non-locally flat meromorphic section of $E$. Since $X$ is a compact Riemann surface, we could choose transition functions $j_{\alpha\beta}$ of $J$ lying in ${\rm U(1)}=\{z\in {\Bbb C}:|z|=1\}$ under a suitable trivialization $\{U_\alpha\}$ of $J$. Then we could write ${\frak s} = ({\frak s}_{1,\alpha}, \, {\frak s}_{2,\alpha})$ with
  \[ {\frak s}_{1,\alpha} = j_{\alpha\beta} \cdot {\frak s}_{1,\beta} \qquad {\frak s}_{2,\alpha} = j_{\alpha\beta}^{-1} \cdot {\frak s}_{2,\beta} \]
in the intersections $U_{\alpha} \cap U_{\beta}$. Then $\{U_{\alpha}, u_{\alpha} := \frac{s_{1, \alpha}} {s_{2, \alpha}}\}$ defines a holomorphic section of $\mathbb{P}(E)$ and $u_{\alpha} = j_{\alpha\beta}^2 \cdot u_{\beta}$. Denoting by $g_{\alpha}$ the metric on $U_{\alpha}$ defined by $u_{\alpha}^*(g_{\rm st})$, we find that these metrics coincide with each other on all intersections $U_{\alpha} \cap U_{\beta}$ since $j_{\alpha\beta}^2 \in {\rm U(1)}$. Hence we obtain a globally defined reducible cone spherical metric $g$.

\end{itemize}

In summary, we  complete the proof of Theorem \ref{thm:correspondence}.

\end{proof}

In order to complete the proof of Theorem \ref{thm:corr}, we need the following two lemmas.

\begin{lem}
\label{lem:doublecover}
Let $X$ be a compact Riemann surface of genus $g_{X} > 0$. Then there exists a compact connected Riemann surface $\tilde{X}$ of genus $2g_{X} - 1$ with an unramified double cover $\pi: \tilde{X} \rightarrow X$.
\end{lem}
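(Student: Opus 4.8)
The plan is to notice first that the genus of $\tilde X$ is \emph{forced} and therefore carries no content: for any unramified (hence everywhere locally biholomorphic) double cover $\pi\colon \tilde X\to X$, the Riemann--Hurwitz formula with vanishing ramification gives
\[
2-2g_{\tilde X}=2\,(2-2g_X),
\]
so that $g_{\tilde X}=2g_X-1$ automatically. Thus I would spend no effort on the genus computation and instead concentrate the entire proof on producing a \emph{connected} finite étale degree-two cover of $X$; its complex structure is then simply the one pulled back from $X$ along the local biholomorphisms, making $\tilde X$ a compact Riemann surface.

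To construct such a cover I would work with two-torsion line bundles, which fits the algebro-geometric framework of the paper. Since $g_X>0$, the group of two-torsion points $\mathrm{Pic}^0(X)[2]\cong(\mathbb{Z}/2\mathbb{Z})^{2g_X}$ is non-trivial, so I may fix a line bundle $\eta$ of degree $0$ with $\eta\not\cong\mathcal O_X$ together with an isomorphism $\eta^{\otimes 2}\xrightarrow{\ \sim\ }\mathcal O_X$. This isomorphism endows the rank-two sheaf $\mathcal O_X\oplus\eta$ with the structure of a sheaf of $\mathcal O_X$-algebras, and I would take $\tilde X:=\mathrm{Spec}_X(\mathcal O_X\oplus\eta)$ with $\pi$ the structure map. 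Because $\eta$ is a line bundle and we are in characteristic zero, $\pi$ is finite of degree two and étale, i.e.\ an unramified double cover. (Equivalently, one may describe $\tilde X$ as the locus inside the total space of $\eta$ cut out by the squaring map landing in $1\in\mathcal O_X$; either description gives the same étale cover.)

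The one genuine point to verify is connectedness, and this is exactly where the hypothesis $g_X>0$ and the choice $\eta\not\cong\mathcal O_X$ are used. I would argue via global functions: since $\pi$ is finite,
\[
H^0(\tilde X,\mathcal O_{\tilde X})=H^0\!\big(X,\mathcal O_X\oplus\eta\big)=\mathbb{C}\oplus H^0(X,\eta),
\]
and a non-trivial line bundle of degree $0$ has no non-zero global sections, so $H^0(X,\eta)=0$ and hence $H^0(\tilde X,\mathcal O_{\tilde X})=\mathbb{C}$. Therefore $\tilde X$ is connected, completing the construction. The main obstacle is thus only this connectedness check rather than anything about the genus; a purely topological alternative, which I would mention as a cross-check, is that a surjection $\pi_1(X)\twoheadrightarrow\mathbb{Z}/2\mathbb{Z}$ exists precisely because $H^1(X,\mathbb{Z}/2\mathbb{Z})\cong(\mathbb{Z}/2\mathbb{Z})^{2g_X}\neq 0$ when $g_X>0$, and the index-two subgroup $\ker$ determines, by the Galois correspondence for covering spaces, a connected unramified double cover whose genus is again $2g_X-1$ by Riemann--Hurwitz.
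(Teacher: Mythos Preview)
Your proposal is correct and follows essentially the same route as the paper: the paper also picks a non-trivial $2$-torsion line bundle $L$ and defines $\tilde X$ as the locus $\{\tau\in L:\tau\otimes\tau=1\}$ in the total space of $L$, which is precisely your second description of the cover. The only cosmetic differences are that the paper proves connectedness by the contrapositive (a splitting $\tilde X=X_1\sqcup X_2$ would give a nowhere-vanishing section of $L$) rather than your cohomological computation $H^0(\tilde X,\mathcal O_{\tilde X})=\mathbb C\oplus H^0(X,\eta)=\mathbb C$, and that you make the Riemann--Hurwitz genus check explicit whereas the paper leaves it implicit.
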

\begin{proof}
Since $g_X>0$, we could choose a non-trivial line bundle $L$ such that $L\otimes L={\mathcal O}_X$. Defining
\[ \tilde{X} = \Big\{ \big(x, \tau(x)\big)\in L \mid x \in X, \tau(x) \in L_{x} \text{ such that } \tau(x) \otimes \tau(x) = 1\Big\}, \]
as Exercise 1 in \cite[Chapter 2]{Voisin02}, we know that the natural projection $\pi: \tilde{X} \rightarrow X$ is an unramified double cover. Suppose that $\tilde{X}$ is not connected. Then $\tilde{X}$ must be a disjoint union of two copies of $X$, i.e.
$\tilde{X} = X_{1} \sqcup X_{2}$,
where $X_{1} \cong X \cong X_{2}$. The isomorphism $X \rightarrow X_{1} \subset \tilde{X}$ gives a nowhere vanishing section $\sigma$ of $L$, which contradicts that $L$ is non-trivial.
\end{proof}

By using  Lemma \ref{lem:evendegree} and Lemma \ref{lem:doublecover}, we can prove
\begin{lem}
\label{lem:degreerelation}
  Let $\{U_{\alpha}, w_{\alpha}\}$ be a branched projective covering of $X$ of genus $g_{X} >0$ and $P = \mathbb{P}(E)$ the indigenous bundle associated to it, where $E$ is a rank two holomorphic vector bundle on $X$. Suppose that $L$ is the line subbundle of $E$ defined by the canonical section $\{U_{\alpha}, w_{\alpha}\}$ of the indigenous bundle $\mathbb{P}(E)$. Then
  \begin{equation}
  \label{equ:rel}
   \deg\,\big(B_{\{U_{\alpha}, w_{\alpha}\}}\big)=\deg(E)-2\deg(L)+2g_{X}-2. \end{equation}
\end{lem}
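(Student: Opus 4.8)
The plan is to reduce the whole statement to a single degree identity for the auxiliary line bundle $\Lambda$ introduced in the proof of Lemma~\ref{lem:evendegree}. Recall from that proof that, having fixed a coordinate cover $\{U_\alpha, z_\alpha\}$ and constant matrices $M_{\alpha\beta}\in\mathrm{SL}(2,\mathbb{C})$ realizing \eqref{wtransition}, the line bundle $\Lambda$ with transition functions $\lambda_{\alpha\beta} = (c_{\alpha\beta}w_\beta + d_{\alpha\beta})^2$ satisfies $\Lambda = H\otimes K_X$ with $H = \mathcal{O}_X(-B_{\{U_\alpha,w_\alpha\}})$, whence $\deg\Lambda = 2g_X - 2 - \deg B_{\{U_\alpha,w_\alpha\}}$. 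Consequently \eqref{equ:rel} is equivalent to the single identity $\deg\Lambda = 2\deg L - \deg E$, and I would obtain this from an isomorphism of line bundles $L^{\otimes 2}\cong\Lambda\otimes\det E$.

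To build this isomorphism I would first normalize the holomorphic transition data of $E$ against the flat structure of $P=\mathbb{P}(E)$. Taking each $U_\alpha$ to be a coordinate disc, the flat $\mathbb{P}^1$-trivializations of $P$ lift to holomorphic frames $\Phi_\alpha$ of $E|_{U_\alpha}$, and in these frames the transition functions become $G_{\alpha\beta} = \mu_{\alpha\beta}M_{\alpha\beta}$ for holomorphic scalars $\mu_{\alpha\beta}$ and the same constant $M_{\alpha\beta}\in\mathrm{SL}(2,\mathbb{C})$ as above; in particular $\det E$ is given by the cocycle $\{\mu_{\alpha\beta}^2\}$. In the frame $\Phi_\alpha$ the subbundle $L$ is spanned by the holomorphic nowhere-vanishing vector $e_\alpha=(w_\alpha,1)^{\mathsf T}$, and the elementary identity $M_{\alpha\beta}(w_\beta,1)^{\mathsf T} = (c_{\alpha\beta}w_\beta + d_{\alpha\beta})(w_\alpha,1)^{\mathsf T}$, which is just a restatement of \eqref{wtransition}, shows that $L$ has transition functions $\mu_{\alpha\beta}(c_{\alpha\beta}w_\beta + d_{\alpha\beta})$ relative to $\{e_\alpha\}$. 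Squaring, the transition cocycle of $L^{\otimes 2}$ is $\mu_{\alpha\beta}^2\lambda_{\alpha\beta}$, which is exactly that of $\Lambda\otimes\det E$; hence $L^{\otimes 2}\cong\Lambda\otimes\det E$, so $2\deg L = \deg\Lambda + \deg E$, and substituting into the previous paragraph yields \eqref{equ:rel}.

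It is worth recording the geometric content: the identity says that $B_{\{U_\alpha,w_\alpha\}}$ is the vanishing divisor of the second fundamental form $L\to (E/L)\otimes K_X$ of the subbundle $L\subset E$, i.e. of a section of $L^{-1}\otimes(E/L)\otimes K_X = L^{-2}\otimes\det E\otimes K_X$, whose degree is $\deg E - 2\deg L + 2g_X - 2$. I would nonetheless keep the transition-function computation as the formal proof, since it stays within the framework of Lemma~\ref{lem:evendegree} and needs no extra input.

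The delicate point, and the step I expect to require the most care, is the cocycle bookkeeping in the second paragraph. One must fix the direction convention for transition functions to avoid spurious inversions, and one must notice that when $\deg B_{\{U_\alpha,w_\alpha\}}$ is odd the constant lifts $M_{\alpha\beta}$ of the $\mathrm{PSU}(2)$-monodromy satisfy the cocycle relation only up to signs, so that neither $\{\mu_{\alpha\beta}\}$ nor $\{c_{\alpha\beta}w_\beta + d_{\alpha\beta}\}$ is separately a cocycle. The argument survives because the three bundles actually used, namely $\det E$, $\Lambda$ and $L$, all carry sign-invariant transition data and are therefore honest line bundles, so the final comparison of cocycles is unambiguous. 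A reader wishing to bypass this parity issue can instead invoke the unramified double cover $\pi:\tilde X\to X$ of Lemma~\ref{lem:doublecover}: on $\tilde X$ the pulled-back ramified divisor has even degree, the flat $\mathrm{SL}(2,\mathbb{C})$ description of Lemma~\ref{lem:evendegree} applies directly, and since $\deg\pi^*B = 2\deg B$, $\deg\pi^*E = 2\deg E$, $\deg\pi^*L = 2\deg L$ and $g_{\tilde X} = 2g_X - 1$, the relation established on $\tilde X$ descends, after dividing by two, to \eqref{equ:rel} on $X$.
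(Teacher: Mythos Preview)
Your argument is correct. Both you and the paper reduce the statement to the identity $\deg\Lambda = 2\deg L - \deg E$, and both ultimately compute the transition cocycle of $L$ in terms of those of $\Lambda$ and $\det E$. The difference lies in how the two parities are handled. The paper splits into cases: in the even case it normalizes $\det E=\mathcal{O}_X$, invokes the explicit meromorphic section $\mathfrak{s}=(w_\alpha f_\alpha,f_\alpha)$ built in Lemma~\ref{lem:evendegree}, and identifies $L$ with the line bundle $\xi$ satisfying $\xi^2=\Lambda$; in the odd case it pulls everything back along the unramified double cover of Lemma~\ref{lem:doublecover}, applies the even case upstairs, and divides by two. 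You instead keep $\det E$ arbitrary, write $G_{\alpha\beta}=\mu_{\alpha\beta}M_{\alpha\beta}$, and read off $L^{\otimes 2}\cong\Lambda\otimes\det E$ directly from the cocycle of the local frames $e_\alpha=(w_\alpha,1)^{\mathsf T}$, treating both parities at once. Your route is cleaner and avoids the auxiliary double cover; it also makes transparent the second-fundamental-form interpretation you record. One small simplification you could make: since $L$ is by hypothesis an honest line subbundle of $E$ and $\{e_\alpha\}$ a genuine local frame for it, the relation $G_{\alpha\beta}e_\beta=\ell_{\alpha\beta}e_\alpha$ already forces $\{\ell_{\alpha\beta}\}$ to be a cocycle, so the sign bookkeeping in your fourth paragraph, while correct, is not strictly needed.
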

\begin{proof} At first we note that
  the right hand side of \eqref{equ:rel} does not depend on the choice of $E$.

  Suppose that $\deg B_{\{U_{\alpha}, w_{\alpha}\}}$ is even.
  We could assume that $\det\, E={\mathcal O}_X$.
   Recalling the second paragraph in the proof of Lemma \ref{lem:evendegree}, we find that ${\frak s}=(w_{\alpha}f_{\alpha}, f_{\alpha})^t$ is a meromorphic section of $E$ and the divisor ${\rm div}(s)$(see the definition in the first paragraph of \cite[Section 4]{Atiyah57II}) associated to $s$ coincides with ${\rm div}(f_\alpha)$. Both the section ${\frak s}$ of $E$ and the section $\{U_{\alpha}, w_{\alpha}\}$ of ${\Bbb P}(E)$ define the same line  subbundle $L$ of $E$, which coincides with $\xi$.
Hence  we have  $$\mathcal{O}_{X}(-B_{\{U_{\alpha}, w_{\alpha}\}}) \otimes K = H \otimes K = \Lambda = \mathcal{O}_{X}(-2\,\text{div}(s))=L^2.$$
We are done for this case since $\deg\, E=0$. Actually, we obtain more than \eqref{equ:rel} that the ramified divisor $B_{\{U_{\alpha}, w_{\alpha}\}}$ of the branched projective covering  is linear equivalent to $K_X-2L$.

  Suppose that $\deg B_{\{U_{\alpha}, w_{\alpha}\}}$ is odd. Since $g_X>0$, there exists an unramified double cover $\pi: \tilde{X} \rightarrow X$ such that $\tilde{X}$ is connected. Without loss of generality, we may assume that $U_{\alpha}$ is sufficient small such that $\left\{\pi^{-1}(U_{\alpha}), \pi^*(w_{\alpha})\right\}$ is a branched projective covering of $\tilde{X}$ and $\pi^*(P) = \mathbb{P}(\pi^*(E))$ is the indigenous bundle associated to $\{\pi^{-1}(U_{\alpha}), \pi^*(w_{\alpha})\}$. Then we have the following three equalities
  \begin{eqnarray*}
   \deg B_{\{\pi^{-1}(U_{\alpha}), \pi^*(w_{\alpha})\}} &=&
   2\deg B_{\{U_{\alpha}, w_{\alpha}\}}, \\ \deg \pi^*(L) &=& 2 \deg L,\\ \deg\pi^*(E) &=& 2 \deg E. \end{eqnarray*}
  By using \eqref{equ:rel} on $\tilde{X}$, we have
  \[ \deg \pi^*(L) = g_{\tilde{X}}-1 + (\deg \pi^*(E) - \deg B_{\{\pi^{-1}(U_{\alpha}), \pi^*(w_{\alpha})\}})/2 \]
  and
  \[ \deg (L) = g_X-1 +(\deg(E) -\deg(B_{\{U_{\alpha}, w_{\alpha}\}}))/2. \]
\end{proof}

\begin{proof}[Proof of Theorem \ref{thm:corr}]

Let $E$ be a rank two stable vector bundle on $X$ with $\deg(E) = -1$ or $0$. Then the projective bundle $\mathbb{P}(E)$ is a unitary flat $\mathbb{P}^1$-bundle. The line subbundle of $E$ defined by a section of ${\Bbb P}(E)$ is non-trivial since $E$ is stable and has degree $-1$ or $0$. On the other hand,  for any given locally flat section $\phi$ of $E$, by the construction in the first paragraph of \cite[Section 4]{Atiyah57II}, we can see that the divisor $\text{div}(\phi)$ associated to $\phi$ vanishes and the line subbundle defined by $\phi$ is trivial. Therefore, each section of $\mathbb{P}(E)$ is non-locally flat. By combining with Theorem \ref{thm:correspondence}, we get the two correspondences in Theorem \ref{thm:corr}.

The equality in the first correspondence follows from Lemma \ref{lem:degreerelation}.

Let $D$ be the effective ${\Bbb Z}$-divisor represented by the metric given by the pair $(E,\,L)$ in the second correspondence. Recalling the equality in the second paragraph of the proof of Lemma \ref{lem:degreerelation}, we find that $D$ lies in the complete linear system $|K_X-2L|$.

In summary, we complete the proof of Theorem \ref{thm:corr}.
\end{proof}

\section{A Lange-type theorem}
\begin{proof} [Proof of Theorem \ref{thm:stable}]
We shall use the inductive argument  to prove the theorem.

If $\deg L = -1$, then there exists an extension of $L^{*}$ by $L$
\[ 0 \rightarrow L \rightarrow E \rightarrow L^{*} \rightarrow 0 \]
such that $\deg L' \leq \deg L$ for all line subbundles $L' \subset E$ \cite[Corollary 1.2]{LN83}. Hence $E$ is stable and has trivial determinant bundle.

Next suppose that for all line bundles $L$ with $-d < \deg L < 0$, there exists a rank two stable vector bundle $E$ such that $L$ is a line subbundle of $E$ and $\det E = \mathcal{O}_{X}$.

Now we consider a line bundle $L$ of degree $-d \leq -2$. Choose a point $p \in X$. Then there exists a rank two stable vector bundle $E$ with $\det E = \mathcal{O}_{X}$  such that $L(p):= L \otimes \mathcal{O}_{X}(p)$ is a line subbundle of $E$. That is, we have the following short exact sequence of locally free sheaves
\begin{equation}
\label{equ:split}
 0 \rightarrow L(p) \rightarrow E \rightarrow E/L(p) \rightarrow 0.
\end{equation}
Hence $L$ is a line subbundle of $E(-p):= E \otimes \mathcal{O}_{X}(-p)$. Consider all the sheaves $\mathcal{F}$ which fit into
\begin{equation}
\label{equ:keyexact}
 E(-p) \subsetneq \mathcal{F} \subsetneq E.
\end{equation}
Since all the sheaves $\mathcal{F}$ on the algebraic curve $X$ have no torsion, they are locally free and are parametrized by 1-dimensional linear subspaces $\mathcal{F}/ E(-p)$ of $E / E(-p) \cong \mathbb{C}^{2}_{p}$.



We write the stalk $E_{p}$ of $E$ at $p$ as
\[ E_{p} = \mathcal{O}_{X,p} \langle (1,0), (0,1) \rangle = \{(s_{1}, s_{2}) \mid s_{1}, s_{2} \in \mathcal{O}_{X,p}\}. \]
Choose a local complex coordinate $z$ centered at $p$. Then we could express $E(-p)_p$ by
\[ E(-p)_{p} = \mathcal{O}_{X,p} \langle (z,0), (0,z) \rangle. \]
Since $\mathcal{F}/ E(-p)$ is a subspace of $E / E(-p)$, by changing the basis of $E_{p}$ if necessary, we could assume that
\[ \mathcal{F}_{p} = \mathcal{O}_{X,p} \langle (z,0), (0,1) \rangle. \]
Therefore
\[ \mathcal{F}_{p}/ E(-p)_{p} = \{(0,v) \mid v \in \mathbb{C} \}. \]
By the exactness of \eqref{equ:split}, we have $E_{p} = L(p)_{p} \oplus (E/L(p))_{p}$ as $\mathcal{O}_{X,p}$-modules. Hence $L(p)_{p}$ can be expressed as
\[ L(p)_{p} = \mathcal{O}_{X,p}\langle (A_{1}, A_{2}) \rangle, \]
where $A_{1}, A_{2} \in \mathcal{O}_{X,p}$ and there exist $B_{1}, B_{2} \in \mathcal{O}_{X,p}$ such that
\[\begin{pmatrix}
   A_{1} & B_{1} \\
   A_{2} & B_{2}
\end{pmatrix} \in GL(2, \mathcal{O}_{X,p}). \]
Moreover
\[ L_{p} = \mathcal{O}_{X,p}\langle (zA_{1}, zA_{2}) \rangle. \]

On the other hand, the subspace of $\mathbb{C}^{2}$ corresponding to the embedding of fiber $L(p)|_{p} \rightarrowtail E|_{p}$ is generated by the vector $\big(A_{1}(p), A_{2}(p)\big)$. Hence the 1-dimensional subspace of $\mathbb{C}_{p}^{2}$ corresponding to $\mathcal{F}/E(-p)$ coincides with the subspace as $L(p)|_{p} \rightarrowtail E|_{p} = \mathbb{C}^{2}$ if and only if $A_{1} \not\in \mathcal{O}_{X,p}^{*}$.

If $A_{1} \not\in \mathcal{O}_{X,p}^{*}$, then we could assume $A_{1} = zA_{1}', A_{1}' \in \mathcal{O}_{X,p}$. Hence
\[ \big(zA_{1}, zA_{2}\big) = \big(z^{2}A_{1}', zA_{2}\big) = \Big( (z,0), (0,1)\Big) \begin{pmatrix}
   zA_{1}' \\
   zA_{2}
\end{pmatrix}.\]
Therefore $L_{p}$ is not a direct summand in $\mathcal{F}_{p}$, which means $\mathcal{F}_{p} / L_{p}$ is not a locally free $\mathcal{O}_{X,p}$-module. Similarly, we obtain that  if $A_{1} \in \mathcal{O}_{X,p}^{*}$, then  $A_{1}(p) \neq 0$ and $\mathcal{F}_{p} / L_{p}$ is a locally free $\mathcal{O}_{X,p}$-module.

In summary, {\it $\mathcal{F}/L$ is not locally free if and only if the 1-dimensional subspace of $\mathbb{C}_{p}^{2}$ corresponding to $\mathcal{F}$ coincides with the subspace as $L(p)|_{p} \rightarrow E|_{p}$.} Now we fix a locally free sheaf $\mathcal{F}$ such that $\mathcal{F}/L$ is locally free. Moreover $\det \mathcal{F} = \mathcal{O}_{X}(-p)$ by \eqref{equ:keyexact}.

Similarly, consider all the locally free sheaves of degree zero fit into
\[  \mathcal{F} \subsetneq \mathcal{G} \subsetneq \mathcal{F}(p). \]
Then $\det \mathcal{G} = \mathcal{O}_{X}$ and $\mathcal{G}/L$ is locally free for $\mathcal{G}$ being generic. We only need to show that a generic $\mathcal{G}$ is stable. Actually,
by \eqref{equ:keyexact}, we have
\[ E(-p) \subsetneq \mathcal{F} \subsetneq E \subsetneq \mathcal{F}(p). \]
Since $\mathcal{G} = E$ is stable and all the sheaves $\mathcal{G}$ are parametrized by $\mathbb{P}^{1}$, we are done by  the openness of stability \cite[Theorem 2]{NS1965}.
\end{proof}

\begin{center}
 {\bf Acknowledgements}
\end{center}

\noindent Song and Xu would like to thank Professor Botong Wang at University of Wisconsin Madison for his generous help with the proof of Theorem 1.2 and a long stimulating conversation on parabolic bundles. The authors would like to express their great gratitude to Professors Kang Zuo, Xiaotao Sun and Mao Sheng for their constant interest and encouragement during the course of this work.
Li is supported in part by the National Natural Science Foundation of China (Grant No. 11501418 ) and Shanghai Sailing Program (15YF1412500).
Song is partially supported by National Natural Science Foundation of China (Grant Nos. 11471298, 11622109 and 11721101) and Tianjin Outstanding Talent Fund.
Xu is supported in part by the National Natural Science Foundation of China (Grant No. 11571330).
Both Song and Xu are supported in part by the Fundamental Research Funds for the Central Universities.

\vspace{0.5cm}

{\sc \noindent Lingguang Li\\
School of Mathematical Sciences\\
Tongji University\\
Shanghai 200092 China}\\
LiLg@tongji.edu.cn\\

{\small {\sc  \noindent Jijian Song\\
Wu Wen-Tsun Key Laboratory of Math, USTC, CAS\\
School of Mathematical Sciences\\
University of Science and Technology of China\\
Hefei 230026 China

\noindent Center for Applied Mathematics\\
School of Mathematics, Tianjin University\\
Tianjin 300350 China}\\
smath@mail.ustc.edu.cn\\

{\sc \noindent Bin Xu\\
Wu Wen-Tsun Key Laboratory of Math, USTC, CAS\\
School of Mathematical Sciences\\
University of Science and Technology of China\\
Hefei 230026 China}\\
\Envelope bxu@ustc.edu.cn}


\begin{thebibliography}{99}

\bibitem{Atiyah57} M. F. Atiyah, \emph{Complex analytic connections in fibre bundles}, Trans. Amer. Math. Soc. {\bf 84} (1957) 181-207.

  \bibitem{Atiyah57II} M. F. Atiyah, \emph{Vector bundles over an elliptic curve}, Proc. Lond. Math. Soc. {\bf 84}:3 (1957) 414-452.

 \bibitem{Ba2000} E. Ballico, \emph{Extensions of stable vector bundles on smooth curves: Lange's conjecture}, An. Stiint. Univ. Al. I. Cuza Iasi. Mat. (N.S.) {\bf 46}:1 (2000), 149-156.

 \bibitem{BR98} E. Ballico and B. Russo, \emph{Exact sequence of stable bundles on projective curves},
 Math. Nachr. {\bf 194}:1 (1998) 5-11.

   \bibitem{BdMM11} D. Bartolucci, F. De Marchis and A. Malchiodi, \emph{Supercritical conformal metrics on surfaces with conical singularities}, IMRM, Vol. 2011, No. 24, 5625-5643.


  \bibitem{CLMP92} E. Caglioti, P. L. Lions, C. Marchioro and M. Pulvirenti, \emph{A special class of stationary flows for two-dimensional Euler equations: a statistical mechanics description}, Comm. Math. Phys. {\bf 143} (1992) 501-525.

  \bibitem{CLW14} C.-L. Chai, C.-S. Lin and C.-L. Wang, \emph{Mean field equations, hyperelliptic curves and modular forms: I}, Cambridge J. Math. {\bf 3}(2015) 127-274.

  \bibitem{CL15} C.-C. Chen and C.-S. Lin, Mean field equation of Liouville type with singular data: topological degree, Comm. Pure Appl. Math.,  {\bf 68}:6 (2015) 887-947.

  \bibitem{CLW2004} C.-C. Chen, C.-S. Lin and G. Wang, \emph{Concentration phenomena of two-vertex solutions in a Chern-Simons model}, Ann. Scuola Norm. Sup. Pisa Cl. Sci. {\bf 3}:5 (2004) 367-397.

  \bibitem{CWWX2015} Q. Chen, W. Wang, Y. Wu and B. Xu, \emph{Conformal metrics with constant curvature one and finitely many conical singularities on compact Riemann surfaces}, Pacific J. Math. {\bf 273}:1 (2015) 75-100.

  \bibitem{Er04} A. Eremenko, \emph{Metrics of positive curvature with conic singularities on the sphere}, Proc. AMS, 132 (2004) 3349-3355.

  \bibitem{Er1706} A. Eremenko, \emph{Co-axial monodromy}, arXiv:1706.04608v1

  \bibitem{EGT1405} A. Eremenko,  A. Gabrielov and V. Tasarov, \emph{Metrics with conic singularities and spherical
polygons},  Illinois J. Math. {\bf 58}:3 (2014) 739-755. 

  \bibitem{EG15} A. Eremenko and A. Gabrielov, \emph{On metrics of curvature 1 with four conic singularities on tori and on the sphere}, Illinois J. Math. {\bf 59}:4 (2015) 925-947.

  \bibitem{EGT1409} A. Eremenko,  A. Gabrielov and V. Tasarov, \emph{Metrics with four conic singularities and
spherical quadrilaterals}, Conform. Geom. Dyn. {\bf 20} (2016) 128-175. 

  \bibitem{EGT1504} A. Eremenko,  A. Gabrielov and V. Tasarov, \emph{Spherical quadrilaterals with three non-integer
angles}, Zh. Mat. Fiz. Anal. Geom., {\bf 12}:2 (2016) 134-167



  \bibitem{Gunning67} R. C. Gunning, \emph{Special coordinate coverings of Riemann surfaces}, Math. Ann. {\bf 170}(1967), 67-86.



  \bibitem{Hei62} M. Heins, \emph{On a class of conformal metrics.} Nagoya Math. J. {\bf 21} (1962) 1-60.


  \bibitem{Lan83} H. Lange,  \emph{Zur Klassifikation von Regelmanningfaltigkeiten}, Math. Ann. {\bf 262}
(1983), 447-459.

  \bibitem{LN83} H. Lange and M. S. Narasimhan, \emph{Maximal sub bundles of rank two bundles on curves}, Math. Ann. {\bf 266} (1983) 55-72.


  \bibitem{Mand72} R. Mandelbaum, \emph{Branched structures on Riemann surfaces}, Trans. Amer. Math. Soc. {\bf 163} (1972) 261-275.

  \bibitem{Mand73} R. Mandelbaum, \emph{Branched structures and affine and projective bundles on Riemann surfaces}, Trans. Amer. Math. Soc. {\bf 183} (1973) 37-58.

  \bibitem{MZ1710} R. Mazzeo and X. Zhu, \emph{Conical metrics on Riemann surfaces, I: the
compactified configuration space and regularity}, arXiv:1710.09781v1

  \bibitem{McOwen88} R. C. McOwen, \emph{Point singularities and conformal metrics on Riemann surfaces}, Proc. Amer. Math. Soc. {\bf 103}:1 (1988) 222-224.

  \bibitem{MP1505} G. Mondello and D. Panov, \emph{Spherical metrics with conical singularities on a 2-sphere: angle constraints}, Int. Math. Res. Not. IMRN 2016, no. 16, 4937-4995.

   \bibitem{MP1807} G. Mondello and D. Panov, \emph{Spherical surfaces with conical points: systole inequality and moduli spaces with many connected components}, arXiv:1807.04373v1

  \bibitem{NS1965} M. S. Narasimhan and C. S. Seshadri, \emph{Stable and unitary vector bundles on a compact Riemann surface}, Ann. of Math. {\bf 82} (1965) 540-567.

  \bibitem{Pi1905} {\' E}. Picard, \emph{De l'int{\' e}gration de l'{\' e}quation $\Delta u =e^u$ sur une surface
Riemann ferm{\' e}e}, J. reine angew. Math. {\bf 130} (1905) 243-258.

\bibitem{Po1898} H. Poincar{\' e}, \emph{Fonctions fuchsiennes et l'{\' e}quation $\Delta u =e^u$},
J. de Math. Pures et Appl. {\bf 5}:4 (1898) 137-230.


  \bibitem{SCLX2017} J. Song, Y. Cheng, B. Li and B. Xu, \emph{Drawing cone spherical metrics via Strebel differentials}, arXiv:1708.06535 [math.CV]



 \bibitem{RT99} B. Russo and M. Teixidor i Bigas, Montserrat, \emph{On a conjecture of Lange}, J. Algebraic Geom. {\bf 8}:(3) (1999) 483-496.

   \bibitem{Tro86} M. Troyanov, \emph{Les surfaces euclidiennes {\' a} singularit{\' e}s coniques},  Enseign. Math. {\bf 32(2)}:1-2 (1986)  79-94.

   \bibitem{Tro89} M. Troyanov, \emph{Metrics of constant curvature on a sphere with two conical singularities}, Differential geometry (Pe${\rm \tilde{n}}${\' i}scola, 1988), 296-306, Lecture Notes in Math., {\bf 1410}, Springer, Berlin, 1989.

  \bibitem{Troyanov91} M. Troyanov, \emph{Prescribing curvature on compact surfaces with conical singularities}, Trans. Amer. Math. Soc. {\bf 324} (1991) 793-821.

  \bibitem{UY2000} M. Umehara and K. Yamada, \emph{Metrics of constant curveture $1$ with three conical singularities on the $2$-sphere}, Illinois J. Math. {\bf 44}:1 (2000) 72-94.

  \bibitem{Voisin02} C. Voisin, \emph{Hodge theory and complex algebraic geometry I}, Cambridge University Press, 2002. Translated from the French by Leila Schneps.


\end{thebibliography}
\end{document}